\numberwithin{equation}{section}
\theoremstyle{plain}
\newtheorem{thm}[equation]{Theorem}
\newtheorem{prop}[equation]{Proposition}
\newtheorem{lem}[equation]{Lemma}
\newtheorem*{claim*}{Claim}
\theoremstyle{definition}
\newtheorem{defn}[equation]{Definition}
\theoremstyle{remark}
\newtheorem{remark}[equation]{Remark}
\newtheorem{asmp}[equation]{Assumption}
\newcommand{\delb}{\sqrt{-1}\partial\bar{\partial}}
\newcommand{\relmiddle}[1]{\mathrel{}\middle#1\mathrel{}}
\author[R.\ Murakami]{Rei Murakami}
\address{Mathematical Institute, Tohoku University, 6-3, Aramaki Aza-Aoba, Aoba-ku, Sendai 980-8578, Japan}
\email{rei.murakami.p3@dc.tohoku.ac.jp, reimurakami66@gmail.com}
\subjclass[2020]{53C55, 53E30}
\begin{document}

\title[Weak limits of the $J$-flow and the dHYM flow]{Weak limits of the $J$-flow and the deformed Hermitian-Yang-Mills flow on K{\"a}hler surfaces: boundary cases}

\begin{abstract}
    We prove that if a pair of K{\"a}hler classes is $J$-nef, the $J$-flow on a compact K{\"a}hler surface converges to a weak solution of the Monge-Amp{\`e}re equation in the sense of currents. We also establish the same convergence behavior for the deformed Hermitian-Yang-Mills flow. The method is based on a property of a limit of viscosity subsolutions.
    
\end{abstract}

\maketitle


\section{Introduction}

The study of geometric flows has emerged as a powerful tool in understanding the complex structure and dynamics of K{\"a}hler manifolds. They provide a geometric process by which one can deform a K{\"a}hler manifold in a controlled manner, aiming to achieve a certain optimality. A critical question in the study is its long-time behavior, particularly its convergence properties, which are naturally related to a canonical structure on K{\"a}hler manifolds.
This paper delves into the analysis of the convergence behavior of two geometric flows, the $J$-flow and the deformed Hermitian-Yang-Mills flow. 

Let us consider an $n$-dimensional K{\"a}hler manifold $X$ endowed with two K{\"a}hler forms, $\chi$ and $\omega$. The $J$-flow, introduced by Donaldson \cite{Donaldson} to study a certain moment map, is a PDE of $\chi$-psh functions given by
\begin{equation*}
        \begin{cases}
            \dot{\varphi_t}=c-\Lambda_{\chi_t}\omega, \\
            \varphi|_{t=0}=\varphi_0,
        \end{cases}
\end{equation*}
where $\dot{\varphi_t}$ denotes the time derivative of $\varphi_t$, the constant $c$ is determined by 
$$c=\frac{\int_X n \, \omega\wedge\chi^{n-1}}{\int_X \chi^n},$$
and $\Lambda_{\chi_t}$ denotes the trace operator with respect to $\chi_t=\chi+\delb\varphi_t$. 
A stationary point of the $J$-flow satisfies the $J$-equation, 
\begin{equation*}
    \Lambda_\chi\omega=c,
\end{equation*}
 discovered by Xiuxiong Chen independently in \cite{XChen} from the perspective of constant scalar curvature K{\"a}hler metrics.
As in \cite{Song}, 
we use the following terminologies:
\begin{defn}
    A pair of K{\"a}hler classes $([\chi],[\omega])$ is called 
    \begin{enumerate}
        \item \textit{$J$-positive} if for any $p \in \{1,\dots,n-1\}$ and any $p$-dimensional subvariety $V \subset X$, 
        we have
        $$\frac{\int_V p \, \omega\wedge\chi^{p-1}}{\int_V \chi^p} < c.$$
        \item \textit{$J$-nef} if for any $p \in \{1,\dots,n-1\}$ and any $p$-dimensional subvariety $V \subset X$, 
        we have
        $$\frac{\int_V p \, \omega\wedge\chi^{p-1}}{\int_V \chi^p} \le c.$$
    \end{enumerate}
\end{defn}
The solvability of the $J$-equation is now known to be equivalent to the $J$-positivity of $([\chi],[\omega])$ \cite{GChen,Datar-Pingali,Song}. On the other hand, \cite{XChen2} showed that the $J$-flow exists for all time, and \cite{SW} showed that the $J$-flow converges smoothly to the solution of the $J$-equation if it exists.

In this paper, we investigate the long-time behavior of the $J$-flow of a $J$-nef pair.

\begin{thm}\label{thmJmain}
    Let $X$ be a compact K{\"a}hler surface. Let $\chi$ and $\omega$ be K{\"a}hler forms and assume $([\chi],[\omega])$ is $J$-nef. Then the $J$-flow $\{\chi_t\}$ converges, in the sense of currents, to the unique current $\chi_\infty$ such that $c\chi_\infty-\omega$ is a positive current and
    \begin{equation}\label{eqJMA}
        \langle (c\chi_\infty-\omega)^2 \rangle=\omega^2,
    \end{equation}
    where $\langle\cdot\rangle$ denotes the non-pluripolar product.
\end{thm}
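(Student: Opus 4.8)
The plan is to exploit the fact that on a K\"ahler surface the $J$-equation is equivalent to a complex Monge-Amp\`ere equation for the form $\theta = c\chi - \omega$. Setting $\theta_0 = c\chi - \omega$ and $\theta_t = c\chi_t - \omega = \theta_0 + \delb(c\varphi_t)$, a direct pointwise computation gives $\Lambda_{\chi_t}\omega = c \iff \theta_t^2 = \omega^2$; moreover, since $n=2$, the relation $\Lambda_\chi\omega = c$ together with $\chi > 0$ forces $c\chi - \omega \ge 0$, because the two $\omega$-eigenvalues relative to $\chi$ are nonnegative and sum to $c$, so each is dominated by $c$. Thus the stationary problem is the Monge-Amp\`ere equation $\langle\theta_\infty^2\rangle = \omega^2$ for a closed positive current $\theta_\infty = \theta_0 + \delb(c\varphi_\infty) \ge 0$. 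The $J$-nef hypothesis is exactly what makes this well posed: the case $p=1$ says $[\theta_0]\cdot[V]\ge 0$ for every curve $V$, while the definition of $c$ forces $[\theta_0]^2 = [\omega]^2 > 0$; hence $[\theta_0] = c[\chi]-[\omega]$ is big and nef with the correct total mass $\int_X\omega^2 = [\theta_0]^2$, so the degenerate equation is balanced.

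Next I would record the basic a priori estimate along the flow. Writing $H = \dot\varphi_t = c - \Lambda_{\chi_t}\omega$ and differentiating the flow equation shows that $H$ solves a linear parabolic equation $\partial_t H = \chi_t^{i\bar q}\chi_t^{p\bar j}\omega_{i\bar j}\,\partial_p\partial_{\bar q}H$ with no zeroth-order term; by the parabolic maximum principle $\sup_X H$ is non-increasing and $\inf_X H$ is non-decreasing, so $\operatorname{osc}_X\dot\varphi_t$ is non-increasing and $\sup_X|\dot\varphi_t|$ is uniformly bounded. Together with $\int_X \dot\varphi_t\,\chi_t^2 = 0$ (which follows cohomologically from the definition of $c$) and the long-time existence of the flow \cite{XChen2}, this yields, after normalizing $\varphi_t$, compactness of the currents $\chi_t$: any sequence $t_k\to\infty$ has a subsequence along which $\varphi_{t_k}\to\varphi_\infty$ in $L^1$ with $\varphi_\infty$ $\chi$-psh and $\chi_{t_k}\to\chi_\infty = \chi+\delb\varphi_\infty$ weakly.

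The heart of the argument is to identify every such subsequential limit $\chi_\infty$ with a solution of $\langle(c\chi_\infty-\omega)^2\rangle = \omega^2$ and to show $c\chi_\infty - \omega\ge 0$. Here I would pass to viscosity solutions: the flow potentials are viscosity subsolutions of a parabolic Monge-Amp\`ere equation, and the relaxed upper limit (the upper semicontinuous regularization of $\limsup$) of the rescaled potentials as $t\to\infty$ is a viscosity subsolution of the stationary equation, while the relaxed lower limit is a supersolution. The decisive input is the stability property that a relaxed limit of viscosity subsolutions is again a viscosity subsolution; combined with the averaged vanishing of $\dot\varphi_t$ forced by the monotonicity above, this gives that the relaxed upper limit solves the equation in the viscosity sense, whence $c\chi_\infty-\omega\ge 0$ and $\langle(c\chi_\infty-\omega)^2\rangle = \omega^2$ through the equivalence between the viscosity and the pluripotential (non-pluripolar) formulations of this equation. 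Uniqueness of bounded solutions in the big and nef class $[\theta_0]$, via the comparison principle for the non-pluripolar Monge-Amp\`ere operator, then shows that all subsequential limits coincide; hence the full flow converges, and $\chi_\infty$ is the unique current with the stated properties.

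I expect the degeneracy of the class to be the main obstacle. Since $c[\chi]-[\omega]$ is nef but in general not K\"ahler, there is no uniform $C^0$ bound on $\varphi_t$ and no smooth limit: the potentials may blow up along the null locus where $[\theta_0]$ fails to be positive, and the limiting equation must be read weakly. The crux is therefore to transport the Monge-Amp\`ere equation across this degeneracy locus --- establishing both the positivity $c\chi_\infty-\omega\ge 0$ and the full-mass identity $\langle(c\chi_\infty-\omega)^2\rangle = \omega^2$ --- and to reconcile the pointwise viscosity description of the limit with the pluripotential one, which is precisely what makes the stability of viscosity subsolutions the right tool.
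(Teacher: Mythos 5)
Your overall architecture (subsequential $L^1$-limits, stability of viscosity subsolutions under limits, the equivalence between the viscosity and non-pluripolar formulations in the big and nef class, uniqueness from the pluripotential theory) is the same skeleton the paper uses, but there are two genuine gaps at the point you call the heart of the argument. First, nothing in your proposal actually proves that $\dot{\varphi}_t$ tends to zero. The parabolic maximum principle gives that $\sup_X\dot{\varphi}_t$ is non-increasing and $\inf_X\dot{\varphi}_t$ is non-decreasing, and the cohomological identity $\int_X\dot{\varphi}_t\,\chi_t^2=0$ forces these to straddle zero, but they may perfectly well converge to a nonzero oscillation; ``averaged vanishing of $\dot\varphi_t$ forced by the monotonicity'' does not follow. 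Without this, the relaxed limits of the potentials are only related to the parabolic equation, not to the stationary one, and the limit current need not satisfy the $J$-equation in any weak sense. This is exactly where the $J$-nefness hypothesis must enter analytically (not merely to make the limit equation well posed): the paper proves $\|\dot\varphi_t\|_{L^2}\to 0$ by showing the $\mathcal{J}$-functional is convex along the flow (via Hashimoto's integral inequality for the linearized operator) and then using Sj\"ostr\"om Dyrefelt's quantitative lower bound $\mathcal{J}(\varphi)\ge -\varepsilon\,\mathrm{E}^\chi_\chi(\varphi)-C_\varepsilon$, valid precisely under $J$-nefness, to force the asymptotic slope of $\mathcal{J}$ to vanish. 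Some such coercivity input is indispensable and is absent from your outline.

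Second, your route from subsolution to solution relies on the relaxed lower limit being a viscosity \emph{supersolution} and on a comparison principle. The paper explicitly states that it was unable to establish a viscosity-solution (hence supersolution) property for the limit, and it avoids the issue entirely: once the limit $T$ is a viscosity subsolution, hence satisfies $\langle(cT-\omega)^2\rangle\ge\omega^2$ as measures, the reverse inequality comes for free from the global mass bound $\int_X\langle(cT-\omega)^2\rangle\le [c\chi-\omega]^2=\int_X\omega^2$ of Boucksom--Eyssidieux--Guedj--Zeriahi, so equality of measures follows without any supersolution statement. You should replace the supersolution/comparison step with this mass argument, both because it is what actually closes the proof and because the supersolution claim is unsupported as written.
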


\begin{remark}
\begin{itemize}
    \item In dimension 2, we can see that a pair $([\chi],[\omega])$ is $J$-nef if and only if the class $[c\chi-\omega]$ is nef and big (the bigness follows from $[c\chi-\omega]^2>0$ and \cite[Theorem 0.5]{DP}). Therefore, the existence and the uniqueness of the solution of \eqref{eqJMA} are proved in \cite{BEGZ}. What we prove here is that the $J$-flow converges to the solution.
    \item Thoerem \ref{thmJmain} solves one of the conjectures posed in \cite{DMS} (see below Conjecture 1.5 there) on Kähler surfaces in $J$-nef cases.
\end{itemize}
\end{remark}

The theorem was proved in \cite{FLSW} when $[c\chi-\omega]$ is additionally semipositive. In this case, they, moreover, proved that the $J$-flow converges smoothly outside a finite number of curves. 

The strategy of the proof of Theorem \ref{thmJmain} is decomposed into 3 steps. 
\begin{enumerate}[label={Step \arabic*}]
    \item\label{stepslope} \textit{$L^2$-convergence of the time derivative.} We prove that the time derivative or equivalently $c-\Lambda_\chi \omega$ converges to zero in $L^2(\omega)$. The key is the convexity of the $\mathcal{J}$-functional along the $J$-flow.
    \item\label{stepvsub} \textit{Viscosity Subsolution.} We prove that the $L^2$-convergence implies that the limit is a viscosity subsolution in the sense of Definition \ref{defnvsub}. The strategy has already appeared in \cite[Proposition 21]{CS17}. The key is a property of a limit in the viscosity theory (Proposition \ref{propCIL}).
    \item\label{steppp} \textit{From subsolution to solution.} We observe that a viscosity subsolution is a subsolution as a measure. In dimension 2, the $J$-equation is equivalent to the Monge-Amp{\`e}re equation, and the statement was claimed in \cite[Corollary 2.6]{EGZ}. Since we could not find proof, we provide it in this paper (Lemma \ref{lemEGZ}). We use here the facts that the solution of \eqref{eqJMA} exists and that it has minimal singularities. With the converse inequality from \cite[Proposition 1.20]{BEGZ}, we complete the proof.
\end{enumerate}

We also apply the method to the deformed Hermitian-Yang-Mills (dHYM for short) flow. Consider an $n$-dimensional compact K{\"a}hler manifold $X$ endowed with two smooth closed real $(1,1)$-forms $\alpha$ and $\omega$, where $\omega$ is K{\"a}hler. The dHYM flow, introduced in \cite{FYZ}, is defined by
\begin{equation}\label{eqdhymflow}
    \begin{cases}
        \dot{\varphi}_t=\cot{\theta_\omega(\alpha_t)}-\cot{\theta_0},\\
        \varphi|_{t=0}=\varphi_0,
    \end{cases}
\end{equation}
where $\theta_\omega(\chi)=\sum_i \mathrm{arccot}\, \lambda_i$ and $\lambda_i$'s are eigenvalues of $\omega^{-1}\alpha$. The cotangent function $\cot{\theta}$ is defined by $1/{\tan{\theta}}$.
The constant $\theta_0$ is chosen to satisfy
$$\theta_0=\arg{\int_X(\alpha+\sqrt{-1}\omega)^n}.$$
The dHYM flow is designed to study the dHYM equation
$${\theta_\omega(\alpha)}={\theta_0},$$
 originally investigated by \cite{JY}, inspired by concepts in mirror symmetry as discussed in \cite{LYZ}. 
 In this paper, we assume $0 <\theta_0 < \pi$, called the supercritical condition. For the solvability of the supercritical dHYM equation, \cite{GChen, CLT} proved statements analogous to \cite{GChen, Song}. By the same strategy stated above, we prove the following:
\begin{thm}\label{thmdHYMmain}
    Let $X$ be a compact K{\"a}hler surface. Let $\alpha$ and $\omega$ be closed real $(1,1)$-forms, where $\omega$ is K{\"a}hler. Assume $0<\theta_0<\pi$ and $[\alpha-\cot{\theta_0}\omega]$ is nef. Assume that an initial form of the dHYM flow satisfies $0<\theta_\omega(\alpha_0)<\pi$. Then the dHYM flow $\{\alpha_t\}$ converges, in the sense of currents, to the unique current $\alpha_\infty$ such that $\alpha_\infty-\cot{\theta_0}\omega$ is a positive current and
    \begin{equation*}
        \langle (\alpha_\infty-\cot{\theta_0}\omega)^2\rangle=(\cot^2{\theta_0}+1)\omega^2.
    \end{equation*}
\end{thm}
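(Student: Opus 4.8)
The plan is to reproduce the three-step scheme of Theorem \ref{thmJmain} almost verbatim, after first recasting the supercritical dHYM equation on a surface as a Monge-Amp\`ere equation for the form $\beta:=\alpha-\cot\theta_0\,\omega$. The algebraic input is the cotangent addition formula: if $\lambda_1,\lambda_2$ are the eigenvalues of $\omega^{-1}\alpha$, then $\theta_\omega(\alpha)=\mathrm{arccot}\,\lambda_1+\mathrm{arccot}\,\lambda_2$, so that
\begin{equation*}
    \cot\theta_\omega(\alpha)=\frac{\lambda_1\lambda_2-1}{\lambda_1+\lambda_2}.
\end{equation*}
Writing $\mu_i=\lambda_i-\cot\theta_0$ for the $\omega$-eigenvalues of $\beta$, a direct computation gives
\begin{equation*}
    \cot\theta_\omega(\alpha)-\cot\theta_0=\frac{\mu_1\mu_2-(1+\cot^2\theta_0)}{\lambda_1+\lambda_2},
\end{equation*}
so where the trace is positive the dHYM equation $\theta_\omega(\alpha)=\theta_0$ is equivalent to $\beta^2=(1+\cot^2\theta_0)\,\omega^2$, which is exactly the target measure equation in the statement. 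Since the choice $\theta_0=\arg\int_X(\alpha+\sqrt{-1}\omega)^2$ gives $\cot\theta_0=([\alpha]^2-[\omega]^2)/(2[\alpha]\cdot[\omega])$, one checks $[\beta]^2=(1+\cot^2\theta_0)\int_X\omega^2>0$; together with the nefness of $[\alpha-\cot\theta_0\,\omega]$ this makes $[\beta]$ nef and big, so by \cite{BEGZ} there is a unique positive current $\beta_\infty$ with minimal singularities solving $\langle\beta_\infty^2\rangle=(1+\cot^2\theta_0)\omega^2$, as in the $J$-flow case.

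For \ref{stepslope} I would run the same convexity argument applied to the energy functional whose (formal) gradient flow is the dHYM flow. The hypothesis $0<\theta_\omega(\alpha_0)<\pi$ is preserved along the flow, and on this supercritical cone the phase operator $\theta_\omega$ is concave; this concavity is what underlies the convexity of the energy along the flow, which in turn makes the time derivative nonincreasing in $L^2$. Combined with boundedness of the energy, which yields time-integrability $\int_0^\infty\!\int_X(\dot\varphi_t)^2\,\omega^2\,dt<\infty$, the monotonicity forces $\dot\varphi_t=\cot\theta_\omega(\alpha_t)-\cot\theta_0\to 0$ in $L^2$.

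Steps \ref{stepvsub} and \ref{steppp} then transfer with no essential change. The $L^2$-convergence, via the limit property in the viscosity theory (Proposition \ref{propCIL}) and the argument of \cite[Proposition 21]{CS17}, shows that $\beta_\infty$ is a viscosity subsolution in the sense of Definition \ref{defnvsub}; Lemma \ref{lemEGZ} upgrades this to a subsolution as a measure, $\langle\beta_\infty^2\rangle\ge(1+\cot^2\theta_0)\omega^2$. Integrating and using the converse mass inequality \cite[Proposition 1.20]{BEGZ} forces equality of the total masses, hence pointwise equality, and identifies $\beta_\infty$ with the unique minimal-singularities solution; setting $\alpha_\infty=\beta_\infty+\cot\theta_0\,\omega$ completes the argument.

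I expect the main obstacle to lie in \ref{stepslope}: unlike the linear operator $\Lambda_\chi$, the operator $\cot\theta_\omega$ is genuinely nonlinear, so I must pin down the correct functional and deduce the monotonicity of $\|\dot\varphi_t\|_{L^2}$ directly from the concavity of $\theta_\omega$ on the supercritical cone, while simultaneously confirming that $0<\theta_\omega(\alpha_t)<\pi$ persists for all time and that it passes to the positivity of the limiting current $\beta_\infty$. A secondary subtlety is that the division by $\lambda_1+\lambda_2$ relating $\dot\varphi_t$ to the Monge-Amp\`ere defect is benign only where the trace is controlled, so in \ref{stepvsub} the viscosity comparison should be carried out for the operator $\cot\theta_\omega$ itself, using the equivalence with the Monge-Amp\`ere operator only at the level of the limiting equation.
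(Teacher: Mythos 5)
Your overall architecture matches the paper's: reduce to the Monge--Amp\`ere equation for $\beta=\alpha-\cot\theta_0\,\omega$ on a surface, prove $L^2$-decay of $\dot\varphi_t$ via convexity of the energy along the flow, pass to a viscosity subsolution of the limit via Proposition \ref{propCIL}, and close with Lemma \ref{lemEGZ} and the mass inequality \cite[Proposition 1.20]{BEGZ}. Steps 2 and 3 transfer essentially as you say, modulo one technical point you gloss over: for the dHYM operator the comparison $Q_{\omega_0}\le Q_\omega$ for a constant-coefficient $\omega_0\le\omega$ is not automatic as it is for $\Lambda_\chi\omega$, and the paper must first perturb to $\alpha_k+\varepsilon\omega$ and invoke \cite[Lemma 5.1]{CL} before mollifying; this is a fixable but necessary detour.

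The genuine gap is in Step 1, precisely where you anticipate trouble and then wave it away. You derive $\int_0^\infty\int_X\dot\varphi_t^2\,dt<\infty$ from ``boundedness of the energy,'' but in the boundary (nef) case the lower boundedness of the $\mathcal{J}$-functional is exactly what is not known --- for the $J$-flow the paper states explicitly that this correspondence ``remains elusive'' and substitutes the weaker coercivity estimate \cite[Theorem 23]{SD}. For the dHYM flow no such off-the-shelf estimate exists, and the paper has to manufacture one: it introduces the perturbed operator $F_{\omega,-\varepsilon}$ and functional $\mathcal{J}_{-\varepsilon}$, observes that nefness of $[\alpha-\cot\theta_0\,\omega]$ makes $[\alpha-(\cot\theta_0-a_0\varepsilon)\omega]$ K\"ahler so that Yau's theorem solves the perturbed Monge--Amp\`ere equation \eqref{eqMA}, uses the smooth convergence of the twisted dHYM flow \cite{CL} to conclude $\mathcal{J}_{-\varepsilon}\ge-C_\varepsilon$ along the flow, and then compares $\mathcal{J}_{-\varepsilon}(\varphi_t)$ with $\mathcal{J}(\varphi_t)$ at the cost of an error $\varepsilon C t$ (controlled via the uniform phase bounds of \cite[Lemma 3.2]{FYZ}). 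Only the resulting bound $\mathcal{J}(\varphi_t)\ge-\varepsilon Ct-C_\varepsilon$, combined with convexity, forces the asymptotic slope $\mu$ to vanish. Without this (or an equivalent) argument your Step 1 does not close, and the later steps have no $L^2$-convergence to start from.
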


Again, the bigness of $[\alpha-\cot\theta_0\omega]$ follows from $[\alpha-\cot\theta_0\omega]^2>0$ and \cite[Theorem 0.5]{DP}.
The theorem was proved in \cite{FYZ} when $[\alpha-\cot{\theta_0}\omega]$ is additionally semipositive. In this case, they, moreover, proved that the dHYM flow converges smoothly outside a finite number of curves.

The paper is organized as follows. In Subsection \ref{secslope}, we conduct \ref{stepslope}. In Subsection \ref{secvsub}, we conduct \ref{stepvsub}. In Subsection \ref{secpp}, we conduct \ref{steppp}. In Section \ref{secdHYM}, we prove Theorem \ref{thmdHYMmain}. We conduct \ref{stepslope} in detail and only point out the differences from the $J$-flow on \ref{stepvsub} and \ref{steppp}.

\begin{subsection}*{Acknowledgements}
The author thanks his supervisor Shin-ichi Matsumura for assistance with revising the paper, as well as for encouragement and support. He also thanks Ryosuke Takahashi for answering questions. 
He further thanks the anonymous referee for many helpful comments and suggestions.
This work was supported by JST, the establishment of university fellowships towards the creation of science technology innovation, Grant Number JPMJFS2102, and by JSPS KAKENHI Grant Number JP24KJ0346.
\end{subsection}

\section{The \texorpdfstring{$J$}{J}-flow}
\subsection{The asymptotic slope of the \texorpdfstring{$\mathcal{J}$}{J}-functional along the \texorpdfstring{$J$}{J}-flow}\label{secslope}
In this subsection, we prove that the time derivative of the $J$-flow converges to zero in the $L^2$-norm if the pair is $J$-nef. 
We start with the observation that the $\mathcal{J}$-functional, given as 
$$\mathcal{J}(0)=0,\quad d\mathcal{J}(\varphi)(\psi)=\int_X \psi\, \left(\Lambda_{\chi_\varphi}\omega-c\right)\chi_\varphi^n,$$
is convex along the $J$-flow.
The following lemma is key to proving it. 

\begin{lem}[{\cite[Lemma 1]{Hashimoto}}]\label{lemhsmt}
    Let $F_{\chi,\omega}$ be the operator defined by
    $$F_{\chi,\omega}(\varphi) = (\delb \varphi, \omega)_\chi + (\partial \Lambda_\chi \omega, \bar{\partial} \varphi)_\chi.$$
    Then, we have
    $$ \int_X \varphi \, F_{\chi,\omega}(\varphi) \, \chi^n \le 0$$
    for any $\varphi \in C^\infty(X)$.
\end{lem}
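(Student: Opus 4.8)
The plan is to reduce $\int_X \varphi\,F_{\chi,\omega}(\varphi)\,\chi^n$ to a manifestly nonpositive integral by repeated integration by parts, using that $X$ is compact without boundary and that $\chi$ and $\omega$ are closed. The structural tools are the pointwise Lefschetz-type identities
\[
  \alpha\wedge\beta\wedge\tfrac{\chi^{n-2}}{(n-2)!}=\big[(\Lambda_\chi\alpha)(\Lambda_\chi\beta)-(\alpha,\beta)_\chi\big]\tfrac{\chi^n}{n!},\qquad n\,\sqrt{-1}\partial g\wedge\bar\partial h\wedge\chi^{n-1}=(\partial g,\bar\partial h)_\chi\,\chi^n,
\]
valid for real $(1,1)$-forms $\alpha,\beta$ and functions $g,h$, together with the elementary Leibniz identity $\varphi\,\delb\varphi=\tfrac12\delb(\varphi^2)-\sqrt{-1}\partial\varphi\wedge\bar\partial\varphi$, which separates a positive Dirichlet-type piece from an exact piece.

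First I would rewrite the first term. Writing $f:=\Lambda_\chi\omega$ and applying the first identity with $\alpha=\delb\varphi$, $\beta=\omega$ (so that $\Lambda_\chi\delb\varphi$ is the complex Laplacian of $\varphi$) gives
\[
  \int_X\varphi\,(\delb\varphi,\omega)_\chi\,\chi^n=\int_X\varphi\,(\Lambda_\chi\delb\varphi)\,f\,\chi^n-n(n-1)\int_X\varphi\,\delb\varphi\wedge\omega\wedge\chi^{n-2}.
\]
In the last integral I substitute the Leibniz identity; the exact part $\tfrac12\delb(\varphi^2)$ pairs to zero against the closed form $\omega\wedge\chi^{n-2}$ by Stokes, leaving $+n(n-1)\int_X\sqrt{-1}\partial\varphi\wedge\bar\partial\varphi\wedge\omega\wedge\chi^{n-2}$. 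For the Laplacian term I write $(\Lambda_\chi\delb\varphi)\chi^n=n\,\delb\varphi\wedge\chi^{n-1}$ and apply the Leibniz identity once more: its exact part, after one further integration by parts, produces exactly $-\int_X\varphi\,(\partial f,\bar\partial\varphi)_\chi\,\chi^n$, the negative of the second term of $F_{\chi,\omega}$, while the remaining part contributes $-\int_X f\,|\partial\varphi|_\chi^2\,\chi^n$.

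The second term of $F_{\chi,\omega}$ therefore cancels, and collecting the pieces yields
\[
  \int_X\varphi\,F_{\chi,\omega}(\varphi)\,\chi^n=-\int_X f\,|\partial\varphi|_\chi^2\,\chi^n+n(n-1)\int_X\sqrt{-1}\partial\varphi\wedge\bar\partial\varphi\wedge\omega\wedge\chi^{n-2}.
\]
It remains to establish the pointwise inequality $n(n-1)\,\sqrt{-1}\partial\varphi\wedge\bar\partial\varphi\wedge\omega\wedge\chi^{n-2}\le f\,|\partial\varphi|_\chi^2\,\chi^n$. I would check this by diagonalizing $\omega$ with respect to $\chi$ at a point: if $\lambda_1,\dots,\lambda_n>0$ are the eigenvalues and $\partial\varphi=\sum_k\eta_k\,dz^k$ in the adapted unitary frame, a direct multilinear computation identifies the left-hand side with $\big(f\,|\partial\varphi|_\chi^2-\sum_k\lambda_k|\eta_k|^2\big)\chi^n$. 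Hence the whole integral collapses to $\int_X\varphi\,F_{\chi,\omega}(\varphi)\,\chi^n=-\int_X\sum_k\lambda_k|\eta_k|^2\,\chi^n\le 0$.

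The main obstacle is purely bookkeeping: keeping the combinatorial factors in the Lefschetz identities straight and ensuring the two exact-form contributions are processed so that the $(\partial f,\bar\partial\varphi)_\chi$ term cancels rather than reinforces. The only genuine geometric input is the positivity of $\omega$, i.e. $\lambda_k\ge 0$, which is precisely what fixes the sign; closedness of $\chi$ and $\omega$ and compactness of $X$ serve only to discard exact terms under Stokes' theorem. For $n=2$ the identities simplify since $\chi^{n-2}=1$, but the argument is uniform in the dimension.
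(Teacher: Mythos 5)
Your proof is correct. Note that the paper itself does not prove this lemma; it is quoted verbatim from \cite[Lemma 1]{Hashimoto}, whose argument is essentially the same integration-by-parts computation you give (carried out there in index notation rather than via the Lefschetz-type identities): after the two integrations by parts the first-order term $(\partial\Lambda_\chi\omega,\bar\partial\varphi)_\chi$ cancels and the integral collapses to $-\int_X(\sqrt{-1}\partial\varphi\wedge\bar\partial\varphi,\omega)_\chi\,\chi^n=-\int_X\sum_k\lambda_k|\eta_k|^2\,\chi^n\le 0$, exactly as you find, with positivity of $\omega$ supplying the sign.
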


\begin{lem}\label{lemconvexity}
    The $\mathcal{J}$-functional is convex along the $J$-flow.
\end{lem}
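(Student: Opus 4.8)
The plan is to verify directly that $t\mapsto\mathcal{J}(\varphi_t)$ has nonnegative second derivative, and to recognize its first derivative as the quadratic form appearing in Lemma~\ref{lemhsmt}. Write $u:=\dot\varphi_t=c-\Lambda_{\chi_t}\omega$ for the time derivative along the flow. Since $d\mathcal{J}(\varphi)(\psi)=\int_X\psi\,(\Lambda_{\chi_\varphi}\omega-c)\,\chi_\varphi^n$ and $\Lambda_{\chi_t}\omega-c=-u$, the chain rule gives
\[
\frac{d}{dt}\mathcal{J}(\varphi_t)=\int_X u\,(\Lambda_{\chi_t}\omega-c)\,\chi_t^n=-\int_X u^2\,\chi_t^n,
\]
so the lemma reduces to proving the inequality $\frac{d}{dt}\int_X u^2\chi_t^n\le 0$.

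To differentiate this integral I would use two standard linearizations along the flow: the volume variation $\frac{d}{dt}\chi_t^n=n\,\delb u\wedge\chi_t^{n-1}=(\Lambda_{\chi_t}\delb u)\,\chi_t^n$, and the variation of the trace operator $\frac{d}{dt}\Lambda_{\chi_t}\omega=-(\delb u,\omega)_{\chi_t}$, the latter giving $\dot u=(\delb u,\omega)_{\chi_t}$. Substituting,
\[
\frac{d}{dt}\int_X u^2\chi_t^n=2\int_X u\,(\delb u,\omega)_{\chi_t}\,\chi_t^n+\int_X u^2\,(\Lambda_{\chi_t}\delb u)\,\chi_t^n.
\]
For the second term, using that $\chi_t$ is closed, integration by parts moves one derivative off $u^2$:
\[
\int_X u^2\,(\Lambda_{\chi_t}\delb u)\,\chi_t^n=n\int_X u^2\,\delb u\wedge\chi_t^{n-1}=-2n\int_X u\,\sqrt{-1}\,\partial u\wedge\bar\partial u\wedge\chi_t^{n-1},
\]
which equals $-2\int_X u\,(\partial u,\bar\partial u)_{\chi_t}\,\chi_t^n$ in the inner-product normalization of Lemma~\ref{lemhsmt}.

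The key observation is that the two resulting terms assemble into twice the quadratic form of Lemma~\ref{lemhsmt}. Indeed, $u=c-\Lambda_{\chi_t}\omega$ forces $\partial\Lambda_{\chi_t}\omega=-\partial u$, so
\[
F_{\chi_t,\omega}(u)=(\delb u,\omega)_{\chi_t}+(\partial\Lambda_{\chi_t}\omega,\bar\partial u)_{\chi_t}=(\delb u,\omega)_{\chi_t}-(\partial u,\bar\partial u)_{\chi_t}.
\]
Combining the two computed terms therefore yields $\frac{d}{dt}\int_X u^2\chi_t^n=2\int_X u\,F_{\chi_t,\omega}(u)\,\chi_t^n$, which is $\le 0$ by Lemma~\ref{lemhsmt}. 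Hence $\frac{d^2}{dt^2}\mathcal{J}(\varphi_t)=-\frac{d}{dt}\int_X u^2\chi_t^n\ge 0$, i.e.\ $\mathcal{J}$ is convex along the $J$-flow.

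I expect the only real difficulty to be bookkeeping: fixing every sign in the two linearizations and in the integration by parts, and matching the normalization of the pointwise inner products $(\cdot,\cdot)_{\chi_t}$ so that the cross term $(\partial\Lambda_{\chi_t}\omega,\bar\partial u)_{\chi_t}$ aligns exactly with the $-(\partial u,\bar\partial u)_{\chi_t}$ produced by the integration by parts. All the genuine content sits in Lemma~\ref{lemhsmt}; once $\frac{d}{dt}\int_X u^2\chi_t^n$ has been rewritten as $2\int_X u\,F_{\chi_t,\omega}(u)\,\chi_t^n$, the sign is automatic. Smoothness and differentiation under the integral sign cause no trouble, since the $J$-flow is smooth for all time.
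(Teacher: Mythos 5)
Your proof is correct and follows essentially the same route as the paper: both compute $\frac{d^2}{dt^2}\mathcal{J}(\varphi_t)=-\frac{d}{dt}\int_X\dot\varphi_t^2\,\chi_t^n$ using the variations of $\chi_t^n$ and $\Lambda_{\chi_t}\omega$, integrate by parts, and identify the result as $-2\int_X\dot\varphi_t\,F_{\chi_t,\omega}(\dot\varphi_t)\,\chi_t^n$ so that Lemma~\ref{lemhsmt} gives the sign. The only difference is cosmetic: you substitute $\partial\Lambda_{\chi_t}\omega=-\partial\dot\varphi_t$ into $F_{\chi_t,\omega}$ before matching terms, while the paper substitutes it into the integrand.
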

\begin{proof}[Proof of Lemma \ref{lemconvexity}] We can compute as
    \begin{align*}
        \frac{d^2}{dt^2}\mathcal{J}(\varphi_t)
        &= - \frac{d}{dt} \int_X \dot{\varphi}_t^2 \, \chi_t^n
        = - \int_X 2 \, \dot{\varphi}_t \, \ddot{\varphi_t} \, \chi^n_t 
          - \int_X \dot{\varphi}_t^2 \, n \, \delb \dot{\varphi_t}  \wedge \chi^{n-1}_t \\
        &= - \int_X 2 \, \dot{\varphi}_t \, \ddot{\varphi_t} \, \chi^n_t 
           + \int_X 2 \, \dot{\varphi}_t \, n \, \sqrt{-1} \partial \dot{\varphi}_t\wedge \bar{\partial} \dot{\varphi}_t \wedge \chi^{n-1}_t \\
        &= - \int_X 2\, \dot{\varphi}_t \, (\delb \dot{\varphi}_t, \omega)_{\chi_t} \, \chi^n_t
           - \int_X 2 \, \dot{\varphi}_t \, n \, \sqrt{-1} \partial \Lambda_{\chi_t} \omega \wedge \bar{\partial} \dot{\varphi}_t \wedge \chi^{n-1}_t \\
        &= -2 \int_X \dot{\varphi}_t \, F_{\chi_t,\omega}(\dot{\varphi}_t) \, \chi^n_t.
    \end{align*}
    Therefore, the claim follows by Lemma \ref{lemhsmt}.
\end{proof}

We are now in a position to establish the convergence in $L^2(\omega)$. The statement readily follows if $J$-nefness corresponds to the $\mathcal{J}$-functional being bounded from below, given the nature of the $J$-flow as a gradient flow of the $\mathcal{J}$-functional. While such a correspondence appears plausible, proof remains elusive. The best achievement to date is \cite[Theorem 23]{SD}, which suffices for validating the statement.

\begin{prop}\label{propslope}
    If a pair of K{\"a}hler classes $([\chi],[\omega])$ is $J$-nef, then the time derivative of the $J$-flow converges to zero in $L^2(\omega)$.
\end{prop}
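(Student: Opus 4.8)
The plan is to run a soft gradient-flow argument and to feed in the $J$-nef hypothesis only through the lower bound on the asymptotic slope supplied by \cite[Theorem 23]{SD}. Along the flow one computes $\frac{d}{dt}\mathcal{J}(\varphi_t)=\int_X \dot{\varphi}_t(\Lambda_{\chi_t}\omega-c)\,\chi_t^n=-\int_X \dot{\varphi}_t^2\,\chi_t^n$. Writing $f(t):=\mathcal{J}(\varphi_t)$ and $I(t):=\int_X \dot{\varphi}_t^2\,\chi_t^n\ge 0$, this reads $f'(t)=-I(t)\le 0$. By Lemma \ref{lemconvexity} the function $f$ is convex, so $f'$ is non-decreasing and hence $I$ is non-increasing; thus $I(t)$ decreases to a limit $I_\infty\ge 0$. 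By convexity the asymptotic slope $\lim_{t\to\infty}f(t)/t$ exists and equals $\lim_{t\to\infty}f'(t)=-I_\infty$.

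The heart of the matter is to show $I_\infty=0$. Since $I$ is already monotone, it suffices to rule out $I_\infty>0$, i.e.\ to show that the asymptotic slope of $\mathcal{J}$ is non-negative. This is exactly where the hypothesis enters: I would invoke \cite[Theorem 23]{SD}, which under $J$-nefness forces $\mathcal{J}$ to have non-negative asymptotic slope (for instance by bounding it from below by a non-negative intersection quantity). Combined with $\lim_{t\to\infty}f(t)/t=-I_\infty\le 0$, this yields $I_\infty=0$, hence $\int_X \dot{\varphi}_t^2\,\chi_t^n\to 0$. I expect this to be the main obstacle, and it is the reason the full equivalence between $J$-nefness and boundedness of $\mathcal{J}$ is avoided: the delicate point is to identify the slope read off the flow trajectory, $\lim f(t)/t$, with the algebraically defined slope that \cite{SD} controls (typically phrased via geodesic rays), and it is precisely this identification that \cite{SD} is designed to supply.

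It remains to upgrade convergence in $L^2(\chi_t^n)$ to convergence in $L^2(\omega)$, and here I would use the maximum principle. Differentiating the flow, $\dot{\varphi}_t$ satisfies a linear parabolic equation $\partial_t \dot{\varphi}_t = A_t^{i\bar{j}}\partial_i\partial_{\bar{j}}\dot{\varphi}_t$, whose coefficient matrix $A_t$ (the linearization of $-\Lambda_{\chi_t}\omega$, of the shape $g_t^{-1}\omega\, g_t^{-1}$) is positive definite and which carries no zeroth-order term; hence $\inf_X \dot{\varphi}_0\le \dot{\varphi}_t\le \sup_X \dot{\varphi}_0$ for all $t$. In particular $\Lambda_{\chi_t}\omega=c-\dot{\varphi}_t\le C$ uniformly in $t$, with $C:=c-\inf_X \dot{\varphi}_0$. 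Applying the arithmetic--geometric mean inequality to the eigenvalues of $\chi_t^{-1}\omega$ then gives the pointwise comparison $\omega^n\le (C/n)^n\,\chi_t^n$, so that $\int_X \dot{\varphi}_t^2\,\omega^n\le (C/n)^n\int_X \dot{\varphi}_t^2\,\chi_t^n\to 0$, which is the asserted $L^2(\omega)$ convergence. This final comparison is routine once the uniform trace bound is in place, so the whole weight of the argument rests on the slope estimate of the previous paragraph.
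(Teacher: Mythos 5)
Your skeleton is the paper's: convexity of $\mathcal{J}$ makes $I(t)=\int_X\dot{\varphi}_t^2\,\chi_t^n$ monotone with limit $-\mu$, the whole problem reduces to showing the asymptotic slope $\mu$ of $\mathcal{J}$ along the flow vanishes, and the final upgrade from $L^2(\chi_t^n)$ to $L^2(\omega)$ via the maximum principle and the trace bound $\Lambda_{\chi_t}\omega\le C$ is exactly right. But the central step is left as a black box, and the way you describe that black box is not what \cite[Theorem 23]{SD} actually provides. That theorem does not identify the flow slope with a geodesic-ray slope, nor does it directly assert that $\mathcal{J}$ has non-negative asymptotic slope under $J$-nefness; what it gives is the $\varepsilon$-coercivity-type lower bound $\mathcal{J}(\varphi)\ge-\varepsilon\,\mathrm{E}^\chi_\chi(\varphi)-C_\varepsilon$ for all $\varepsilon>0$, valid for $J$-nef pairs. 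So after invoking it you are not done: you must still control $\mathrm{E}^\chi_\chi(\varphi_t)$ along the flow.

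The missing ingredient is the linear growth estimate $\mathrm{E}^\chi_\chi(\varphi_t)\le A+Ct$. This follows from two facts you already have at your disposal but do not combine: the normalization $\int_X\dot{\varphi}_t\,\chi_t^n=0$ (a consequence of the definition of $c$), which reduces $\frac{d}{dt}\mathrm{E}^\chi_\chi(\varphi_t)$ to $-\int_X\dot{\varphi}_t\, n\,\chi\wedge\chi_t^{n-1}$, and the uniform bound $|\dot{\varphi}_t|\le C$ from your maximum-principle step, which bounds that integral by a cohomological constant. Feeding this into the SD inequality gives $\mathcal{J}(\varphi_t)\ge-C\varepsilon t-C_\varepsilon-A\varepsilon$, hence $\mu\ge-C\varepsilon$ for every $\varepsilon>0$ and so $\mu=0$. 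Without this step your argument does not close; with it, it coincides with the paper's proof.
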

\begin{proof}
    Note that 
    \begin{equation}\label{eqslope}
        \int_X \dot{\varphi}_t^2 \, \chi_t^n= -\frac{d}{dt}\mathcal{J} (\varphi_t) \ge 0.
    \end{equation}
    Since the $\mathcal{J}$-functional is convex along the $J$-flow by Lemma \ref{lemconvexity}, we know that there exists a constant $\mu \le 0$ such that
    $$\lim_{t\rightarrow\infty}\frac{d}{dt}\mathcal{J}(\varphi_t) = \mu.$$
    We claim that $\mu =0.$ Since the pair $([\chi],[\omega])$ is $J$-nef, by \cite[Theorem 23]{SD}, we see that for any $\varepsilon>0$, there exists a constant $C_\varepsilon$ such that for any $\varphi \in C^\infty \cap PSH(X,\chi)$,
    $$\mathcal{J} (\varphi) \ge -\varepsilon \, \mathrm{E}^\chi_\chi(\varphi) - C_\varepsilon,$$
    where $\mathrm{E}^\chi_\chi$ is defined by
    $$\mathrm{E}^\chi_\chi(0)=0,\quad d\mathrm{E}^\chi_\chi(\varphi)(\psi)=\int_X \psi \, (n-\Lambda_{\chi_\varphi}\chi) \chi_\varphi^n.$$
    Along the $J$-flow, we have
    $$\int_X \dot{\varphi}_t \, \chi_{\varphi_t}^n=0.$$ 
    The maximum principle for the $J$-flow implies that
    $|\dot{\varphi}_t| < C$ 
    for some constant C. 
    Thus, we get
    $\mathrm{E}^\chi_\chi(\varphi_t) \le A+C \, t$ 
    for some constant $C$, where $A=E^\chi_\chi(\varphi_0)$.
    Combining these estimates, we have
    $$\mathcal{J}(\varphi_t) \ge - C \varepsilon t - C_\varepsilon-A\varepsilon.$$
    By dividing both hands by $t$ and letting $t$ tend to $\infty$, we have 
    $\mu \ge -C \varepsilon.$
    Since $\varepsilon>0$ can be arbitrary and $\mu \le 0$, we get $\mu =0.$ Since the maximum principle implies that $|\dot{\varphi_t}|<C$ for some constant $C$, we also know that 
    $\chi_t > C \omega$
    for some constant $C>0$.
    Therefore, by \eqref{eqslope}, the equality $\mu=0$ confirms the statement. 
\end{proof}

\subsection{Viscosity subsolutions}\label{secvsub}
For positive definite Hermitian $n \times n$ matrices $A$ and $B$, we define 
$$P_B(A) = \max_{k=1,\dots,n} \left(\sum_{j \neq k} \frac{1}{\lambda_j}\right), \quad 
Q_B(A) = \sum^n_{j=1} \frac{1}{\lambda_j},$$
where $\lambda_j$'s are eigenvalues of the matrix $B^{-1}A$.
For K{\"a}hler forms $\chi$ and $\omega$, we define
$$P_\omega(\chi) = P_{I_n}(\omega^{-1}\chi), \quad
Q_\omega(\chi) = Q_{I_n}(\omega^{-1}\chi),$$
where $\omega^{-1}\chi$ is seen as a matrix. By Proposition \ref{propslope}, along the $J$-flow $\varphi_t$, we have
$$\|Q_\omega(\chi_t)-c\|_{L^2(\omega)} \rightarrow 0.$$
We prove that this convergence implies that the weak limit is a viscosity subsolution. We first recall the definition of the viscosity subsolution in a domain (see \cite{CIL, DDT}).

\begin{defn}
    Let $\Omega \subset \mathbb{C}^n$ be a domain and $A$ an $n\times n$-matrix-valued function which is positive definite and Hermitian for any $x\in\Omega$. An upper semicontinuous function $\varphi$ is said to be a viscosity subsolution of $Q_{A}(\delb \varphi) = c$ at $x \in \Omega$, denoted by 
    $$Q_{A}(\delb\varphi)(x)\le_v c,$$ 
    if for any upper test $q$ of $\varphi$ at $x \in \Omega$ (i.e.,\,a function $q$ is $C^2$-function defined on a neighborhood $U$ of $x$ and satisfies $q\ge\varphi$ with $q(x)=\varphi(x)$), we have $$Q_{A(x)}\left((\delb q)(x)\right) \le c.$$
    An upper semicontinuous function $\varphi$ is said to be a viscosity subsolution in $\Omega$, denoted by
    $$Q_{A}(\delb \varphi) \le_v c,$$
    if $Q_A(\delb \varphi) (x)\le_v c$ at any point $x \in\Omega$.
\end{defn}

We give the following definition of viscosity subsolutions on K{\"a}hler manifolds. For the Monge-Amp{\`e}re equation, the definition has appeared in \cite{EGZ}.
\begin{defn}\label{defnvsub}
    Let $(X,\omega)$ be a K{\"a}hler manifold. We say that a closed positive $(1,1)$-current $\chi$ is a viscosity subsolution, denoted by
    $$Q_\omega(\chi) \le_v c,$$
    if for any point $x \in X$ and any coordinate neighborhood $U$ of $x$, we have $$Q_\omega(\delb\varphi)(x) \le_v c,$$
    where $\varphi$ is an upper semicontinuous local potential of $\chi$.
\end{defn}

Consider $\psi_t := \varphi_t - \sup{\varphi_t}$. By weak compactness, we have an $L^1$-limit $\psi_\infty$ of $\psi_t$ with $\sup{\psi_\infty}=0$ if we take a subsequence. To prove $Q_\omega(\chi_\infty)\le_v c$, where $\chi_\infty=\chi+\delb\psi_\infty$, the following proposition on the limit behavior in the viscosity theory is essential:

\begin{prop}[{\cite[Proposition 4.3]{CIL}}]\label{propCIL}
    Let $\Omega \in \mathbb{C}^n$ be a domain, $v$ an upper semicontinuous function on $\Omega$, $z \in \Omega$, and $q$ an upper test of $v$ at $z$. Suppose also that $u_n$ is a sequence of upper semicontinuous functions on $\Omega$ such that
    \begin{enumerate}[label={\upshape(\roman*)}]
        \item\label{itemCIL1} there exists $x_n \in \Omega$ such that $(x_n,u_n(x_n)) \rightarrow (z,v(z))$,
        \item\label{itemCIL2} if $z_n \in \Omega$ and $z_n \rightarrow x\in\Omega$, then $\limsup_{n\rightarrow\infty}u_n(z_n)\le v(x)$.
    \end{enumerate}
    Then there exist $\hat{x}_n\in \Omega$ and upper tests of $u_n$ at $\hat{x}_n$ denoted by $q_n$ such that
    $$(\hat{x}_n,u_n(\hat{x}_n),Dq_n(\hat{x}_n),D^2q_n(\hat{x}_n))\rightarrow(z,v(z),Dq(z),D^2q(z)).$$
\end{prop}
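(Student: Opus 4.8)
The plan is to run the standard perturbed-maximum argument of viscosity theory. First I would reduce to the case where $z$ is a \emph{strict} global maximum of $v-q$. Since $q$ is an upper test, $v-q\le 0$ on the neighborhood $U$ where $q$ is defined, with equality at $z$. Replacing $q$ by $\tilde q(x):=q(x)+|x-z|^4$ keeps $\tilde q\ge v$ and $\tilde q(z)=v(z)$, while making $v-\tilde q<0$ away from $z$, so $z$ becomes the unique maximizer of $v-\tilde q$ on $U$. The crucial point is that the quartic bump has vanishing gradient and Hessian at $z$, so $D\tilde q(z)=Dq(z)$ and $D^2\tilde q(z)=D^2q(z)$; hence it suffices to prove the statement with $\tilde q$ in place of $q$, and I rename $\tilde q$ back to $q$.

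Next, fix a closed ball $\bar B:=\bar B(z,r)\subset U$, on which $z$ remains the strict maximum of $v-q$. Since each $u_n$ is upper semicontinuous and $q$ is continuous, $u_n-q$ attains its maximum over the compact set $\bar B$ at some $\hat x_n$, with value $M_n:=(u_n-q)(\hat x_n)$. Hypothesis \ref{itemCIL1} gives $(u_n-q)(x_n)\to v(z)-q(z)=0$ with $x_n\in\bar B$ for large $n$, so $\liminf_n M_n\ge 0$. For the matching upper bound I would use compactness: along any subsequence of $\{\hat x_n\}$ I may extract a further subsequence $\hat x_{n_k}\to\bar x\in\bar B$, and then \ref{itemCIL2} together with continuity of $q$ yields $\limsup_k M_{n_k}\le v(\bar x)-q(\bar x)\le 0$. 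As a subsequential $\limsup$ is at least the full $\liminf$, this forces $(v-q)(\bar x)=0$, hence $\bar x=z$ by strictness. Since every subsequential limit of $\hat x_n$ equals $z$, I conclude $\hat x_n\to z$, and the same estimate then gives $M_n\to 0$.

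Finally, because $\hat x_n\to z$ lies in the interior of $\bar B$ for large $n$, the function $q_n:=q+M_n$ satisfies $q_n\ge u_n$ on $\bar B$ with equality at the interior point $\hat x_n$, so $q_n$ is a genuine upper test of $u_n$ at $\hat x_n$. Then $u_n(\hat x_n)=q(\hat x_n)+M_n\to q(z)+0=v(z)$, while $Dq_n(\hat x_n)=Dq(\hat x_n)\to Dq(z)$ and $D^2q_n(\hat x_n)=D^2q(\hat x_n)\to D^2q(z)$ by the $C^2$-continuity of $Dq$ and $D^2q$ and $\hat x_n\to z$. This is exactly the asserted convergence. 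The one delicate point, and the step I expect to be the main obstacle, is establishing $\hat x_n\to z$: this is where both hypotheses are indispensable, \ref{itemCIL1} supplying the lower bound $\liminf_n M_n\ge 0$ and \ref{itemCIL2} (together with the strictness arranged by the quartic perturbation) forcing every subsequential limit of the maximizers back to $z$. Everything else is routine continuity.
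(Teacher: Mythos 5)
Your proof is correct; the paper gives no proof of this statement (it is quoted from \cite[Proposition 4.3]{CIL}), and your argument --- reduction to a strict maximum via the quartic bump $|x-z|^4$, whose gradient and Hessian vanish at $z$, followed by the compactness argument showing that the maximizers $\hat x_n$ of $u_n-q$ on a small closed ball converge to $z$ with maximal values $M_n\to 0$, so that $q+M_n$ serves as the upper test --- is precisely the standard proof of that result. The only step you pass over silently is the application of hypothesis (ii) to a \emph{subsequence} $\hat x_{n_k}\to\bar x$ rather than a full sequence; this is repaired by the routine device of extending the subsequence to a full sequence converging to $\bar x$ and comparing limsups, so it is not a gap.
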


\begin{prop}\label{propvsub}
    Suppose that a sequence of K{\"a}hler forms $\chi_k=\chi+\delb\psi_k$ satisfies
    \begin{align*}
        \psi_k \rightarrow \psi_\infty \, \text{in $L^1(\omega)$ and} \ 
        \|Q_\omega(\chi_k)-c\|_{L^2(\omega)} \rightarrow 0.    
    \end{align*}
    Then, we have
    $$Q_\omega(\chi_\infty)\le_v c,$$
    where $\chi_\infty =\chi+\delb\psi_\infty$.
\end{prop}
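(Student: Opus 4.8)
The plan is to argue by contradiction, converting a pointwise failure of the viscosity inequality into a failure on a set of \emph{positive measure}, which is incompatible with the $L^2$-convergence $\|Q_\omega(\chi_k)-c\|_{L^2(\omega)}\to 0$. Suppose $Q_\omega(\chi_\infty)\le_v c$ fails. Working locally as in Definition \ref{defnvsub}, there is a point $z$, a coordinate ball $B=B(z,2r)$, an upper semicontinuous local potential $v=h+\psi_\infty$ of $\chi_\infty$ on $B$ (where $h$ is a smooth local potential of $\chi$, so $\delb h=\chi$), and a $C^2$ upper test $q$ of $v$ at $z$ with $Q_\omega(\delb q)(z)>c$. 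For this inequality to be meaningful $\delb q(z)$ must be positive definite, so by continuity of $\delb q$ and of $Q_\omega$ we may shrink $r$ so that $\delb q>0$ and $Q_\omega(\delb q)>c+\delta$ on $\overline{B(z,r)}$ for some $\delta>0$. Adding a small multiple of $|x-z|^2$, I may assume $q$ is a strict upper test, i.e. $q-v\ge\eta>0$ on the sphere $\partial B(z,r)$.

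Next I set $u_k=h+\psi_k$, so that $u_k$ is smooth and plurisubharmonic with $\delb u_k=\chi_k>0$, $u_k\to v$ in $L^1(B)$, and $v$ is the upper semicontinuous representative of the limit. Two standard pluripotential facts then hold: by Hartogs' lemma $z_k\to x$ implies $\limsup_k u_k(z_k)\le v(x)$, which is hypothesis \ref{itemCIL2} of Proposition \ref{propCIL}; and $(\limsup_k u_k)^*=v$, which yields $\hat x_k\to z$ (after passing to a subsequence, harmless since $L^2$-convergence is inherited by subsequences) with $u_k(\hat x_k)\to v(z)=q(z)$, which is hypothesis \ref{itemCIL1}. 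Concretely, the strict-majorant form of Hartogs' lemma gives $u_k-q<-\eta/2$ on $\partial B(z,r)$ for large $k$, while $u_k(\hat x_k)-q(\hat x_k)\to 0$ at the interior points $\hat x_k$; Proposition \ref{propCIL} may be invoked to supply these interior contact data together with the convergence of the associated test jets.

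The core is an Alexandrov--Bakelman--Pucci contact argument. For $\xi$ in a small ball $B(0,\rho)\subset\mathbb{R}^{2n}$, the function $u_k-q-\langle\xi,\cdot-z\rangle$ attains its maximum over $\overline{B(z,r)}$ at an interior point $x_\xi$ (the boundary is beaten by $\hat x_k$ once $\rho$ is small and $k$ large), where $Du_k(x_\xi)=Dq(x_\xi)+\xi$ and, crucially, $D^2 u_k(x_\xi)\le D^2 q(x_\xi)$ as real Hessians. On the contact set $E_k=\{x_\xi:\xi\in B(0,\rho)\}$ the map $\Phi_k=Du_k-Dq$ covers $B(0,\rho)$, so the area formula gives $|B(0,\rho)|\le\int_{E_k}|\det(D^2 u_k-D^2 q)|\,dx$. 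Here plurisubharmonicity enters decisively: on $E_k$ the matrix $D^2 q-D^2 u_k$ is positive semidefinite with trace $\Delta q-\Delta u_k\le\Delta q$, since $\Delta u_k\ge 0$, so by the arithmetic--geometric mean inequality its determinant is at most $(\sup_B\Delta q/2n)^{2n}=:C$, independently of $k$. Hence $|E_k|\ge|B(0,\rho)|/C=:m_0>0$, uniformly in $k$. \textbf{This uniform measure lower bound is the main obstacle}, and the plurisubharmonicity of $u_k$ is exactly what supplies the one-sided determinant bound that makes it work; without it the single-point information coming from Proposition \ref{propCIL} cannot contradict an $L^2$ bound.

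To finish, the real Hessian inequality at $x_\xi$ passes to the complex Hessian via the averaging identity for $\partial\bar\partial$, giving $\chi_k(x_\xi)=\delb u_k(x_\xi)\le\delb q(x_\xi)$ as Hermitian forms. Since $Q_\omega$ is monotone decreasing in its argument and $x_\xi\in B(z,r)$, this yields $Q_\omega(\chi_k)(x_\xi)\ge Q_\omega(\delb q)(x_\xi)>c+\delta$, so $|Q_\omega(\chi_k)-c|>\delta$ on $E_k$. Therefore $\|Q_\omega(\chi_k)-c\|_{L^2(\omega)}^2\ge\delta^2\,\big(\inf_{\overline{B(z,r)}}\omega^n/dx\big)\,m_0>0$ for all large $k$, contradicting the hypothesis. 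I expect the remaining bookkeeping --- strictness of the test, positivity of $\delb q$ near $z$, interior maximality under the affine perturbation, and the real-to-complex Hessian passage --- to be routine.
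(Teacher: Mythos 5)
Your proof is essentially correct, but it takes a genuinely different route from the paper. The paper's proof is ``constructive'': it mollifies the potentials $u_k$, uses the \emph{convexity} of $Q_{I_n}$ together with Jensen and Cauchy--Schwarz to bound $Q_{\omega_0}(\delb u_k^\delta)$ pointwise by $C_\delta\|Q_\omega(\chi_k)-c\|_{L^2(\omega)}+c$ for a constant-coefficient minorant $\omega_0\le\omega$, and then applies the Crandall--Ishii--Lions stability result (Proposition \ref{propCIL}) twice (first in $k$, then in $\delta$) before letting $\omega_0\to\omega$ at the base point. You instead argue by contradiction via an Alexandrov--Bakelman--Pucci contact-set estimate: a single violating upper test is upgraded, using the Hartogs boundary comparison and the determinant bound $\det(D^2q-D^2u_k)\le(\sup\Delta q/2n)^{2n}$ coming from $\Delta u_k\ge 0$, to a set $E_k$ of \emph{uniformly} positive measure on which $Q_\omega(\chi_k)>c+\delta$, contradicting the $L^2$ hypothesis. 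Your route uses only the degenerate ellipticity (antimonotonicity) of $Q_\omega$ plus plurisubharmonicity of the $u_k$, not convexity of the operator, and it avoids both the mollification and the constant-coefficient approximation of $\omega$; it is also stable under weakening $L^2$ to $L^1$ or convergence in measure. What it gives up is the intermediate statement $Q_{\omega_0}(\delb u_\infty^\delta)\le_v c$ for each $\delta>0$, which the paper reuses in Proposition \ref{propequivalence} and Remark \ref{remL1}.

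Two small points to tighten. First, the negation of $Q_\omega(\chi_\infty)\le_v c$ could a priori be witnessed by an upper test $q$ with $\delb q(z)\ge 0$ degenerate (it is automatically $\ge 0$ since $u_\infty$ is psh), where $Q$ is undefined rather than $>c$; you should reduce this to your case by replacing $q$ with $q+\varepsilon|x-z|^2$, whose complex Hessian is positive definite and whose $Q$-value still exceeds $c$ for small $\varepsilon$ because $Q\to+\infty$ as an eigenvalue tends to $0^+$. Second, your invocation of Proposition \ref{propCIL} is redundant: the existence of the near-contact points $\hat x_k$ is its hypothesis \ref{itemCIL1}, which you already derive from $(\limsup_k u_k)^*=v$, and its conclusion (single-point jet convergence) is exactly what your ABP step is designed to replace; you can drop the reference entirely.
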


\begin{proof}[Proof of Proposition \ref{propvsub}]
    We follow the proof of \cite[Proposition 21]{CS17}. 
    Fix a point $p \in X$ and take a coordinate neighborhood $U$ of $p$. Let $\omega_0$ be a K{\"a}hler form on some neighborhood $U_0 \subset U$ with constant coefficients such that $\omega_0 \le \omega$. Denote by $f$ an upper semicontinuous local potential of $\chi$ on $U_0$. Take a standard mollifier $\rho$, i.e.,\,a function $\rho$ satisfies $$\rho\ge0, \ \rho(w)=0 \text{ for $|w|>1$},\ \text{and } \int\rho(w)dw=1.$$ Define $u_k=f+\psi_k$ and denote by $u^\delta_k$ the regularization of $u_k$, i.e.,
    \begin{equation*}
        u^\delta_k(z)=\int_{\mathbb{C}^n} u_k(z-\delta w)\rho(w)dw.
    \end{equation*}
    The function $u^\delta_k$ is defined on $U_{0,\delta} =\{z\in U_0\mid d(z,\partial U_0)> \delta \}.$
    We see that for $z \in U_{0,\delta}$,
    \begin{equation}\label{eqregularization}
        \begin{split}
        Q_{\omega_0}(\delb u^\delta_k)(z)
        &=Q_{I_n}\left(\omega^{-1}_0(z) \int_{\mathbb{C}^n}\delb u_k(z-\delta w)\rho(w)dw
        \right)\\
        &=Q_{I_n}\left( \int_{\mathbb{C}^n}(\omega^{-1}_0 \chi_k)(z-\delta w)\rho(w)dw
        \right)\\
        &\le Q_{I_n}\left( \int_{\mathbb{C}^n}(\omega^{-1} \chi_k)(z-\delta w)\rho(w)dw
        \right)\\
        &\le \int_{\mathbb{C}^n}\left(Q_{I_n}(\omega^{-1}\chi_k)\right)(z-\delta w)\rho(w)dw\\
        &\le \left(\int_{B_1}\left|\left(Q_{I_n}(\omega^{-1}\chi_k)\right)(z-\delta w)-c\right|^2 dw\right)^{1/2}\|\rho\|_2 +c\\
        &\le C_\delta \|Q_\omega(\chi_k)-c\|_{L^2(\omega)} +c,
        \end{split}
    \end{equation}
    where $C_\delta$ is a constant depending on $\delta$ and $B_1$ is a unit ball with a center zero. In the second equality, we used the fact that $\omega_0$ has constant coefficients. In the first inequality, we used that $\omega_0\le \omega$. In the second inequality, we used that $Q_{I_n}$ is convex. When $k$ tends to $\infty$, the last line tends to $c$. We claim that $Q_{\omega_0}(\delb u^\delta_\infty)\le_v c$ on $U_{0,\delta}$. Indeed, note that $u^\delta_k$ converges to $u^\delta_\infty$ uniformly in $U_{0,\delta}$. Therefore, a sequence $\{ u^\delta_k\}$ and the limit $u^\delta_\infty$ satisfy the conditions in Proposition \ref{propCIL}. Choose a point $z \in U_{0,\delta}$ and an upper test $q$ of $u^\delta_\infty$ at $z$. By Proposition \ref{propCIL}, there exist points $\hat{x}_k$ and upper tests $q_k$ of $u^\delta_k$ at them such that the assertion satisfies. It implies 
    $$Q_{\omega_0}(\delb q_k)(\hat{x}_k) \rightarrow Q_{\omega_0}(\delb q)(z) \le c,$$
    which yields $Q_{\omega_0}(\delb u^\delta_\infty)\le_v c$ on $U_{0,\delta}$.
    Next, we let $\delta$ tend to zero. Since $u_\infty = f + \psi_\infty$ is psh, a sequence $\{ u^\delta_\infty \}$ is decreasing. This property implies that the sequence satisfies the conditions in Proposition \ref{propCIL}. 
    Indeed, condition \ref{itemCIL1} is obvious by taking the same point as a sequence. For condition \ref{itemCIL2}, note that for any fixed $\delta_0>0$, the function $u^{\delta_0}_\infty$ is upper semicontinuous, so for any $\delta < \delta_0$ and $z_\delta \in V \subset\subset U_0$ satisfying $z_\delta \rightarrow x \in V$,
    $$\limsup{u^\delta_\infty}(z_\delta)\le \limsup{u^{\delta_0}_\infty}(z_\delta) \le u^{\delta_0}_\infty(x).$$
    Letting $\delta_0$ tend to zero, we get condition \ref{itemCIL2}. Therefore we can apply Proposition \ref{propCIL} again to get
    $$Q_{\omega_0}(\delb u_\infty) \le_v c \ \text{on $U_0$}.$$
    Especially at point $p$, for any upper test $q$ of $u_\infty$ at $p$, we have
    $$Q_{\omega_0}(\delb q)(p) \le c.$$
    By taking a sequence of K{\"a}hler forms $\{\omega_i\}$ with constant coefficients such that $\omega_i \le \omega$ and $\omega_i(p) \rightarrow \omega(p)$ (by taking $U_0$ smaller and smaller), we get
    \[Q_\omega(\delb q)(p) \le c. \qedhere \]
\end{proof}

\begin{remark}
    The arguments in the proof clarify that the condition on the positivity of currents introduced in \cite[Definition 3.3]{GChen} implies the condition of viscosity subsolutions. Namely, if a closed positive $(1,1)$-current $\chi$ is in $\bar{\Gamma}_{\omega,c}$, then
    $P_\omega(\chi)\le_v c.$ Here, a closed positive $(1,1)$-current $\chi$ is in $\bar{\Gamma}_{\omega,c}$ if for any open subset $O$ of any coordinate neighborhood and any K{\"a}hler form $\omega_0$ on $O$ with constant coefficients such that $\omega_0 \le \chi$, we have
    $$P_{\omega_0}(\delb \varphi_\delta) \le c$$
    for any $\delta>0$,
    where $\varphi$ is a local potential of $\chi$ and $\varphi_\delta$ is its regularization via convolution.
    By standard techniques in the viscosity theory, we can prove that they are equivalent.
    \begin{prop}\label{propequivalence}
        A closed positive  $(1,1)$-current $\chi$ satisfies $P_\omega(\chi)\le_v c$ if and only if $\chi \in \bar{\Gamma}_{\omega,c}$.
    \end{prop}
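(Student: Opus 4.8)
The plan is to prove the two implications separately; both rest on two elementary properties of the operator $P$. First I would record its monotonicity: if $A' \ge A$ then $P_B(A') \le P_B(A)$, and if $\omega_0 \le \omega$ then $P_{\omega_0}(M) \le P_\omega(M)$, both being consequences of the monotonicity of the generalized eigenvalues of $B^{-1}A$ (Weyl's theorem applied to $B^{-1/2}AB^{-1/2}$ and to the Rayleigh quotient $v^*Mv/v^*\omega_0 v$). Second, and crucially for the converse implication, $M \mapsto P_{I_n}(M)$ is convex on positive definite Hermitian matrices: it equals the sum of the largest $n-1$ eigenvalues of $M^{-1}$, that is, the composition of the convex nondecreasing function ``sum of the largest $n-1$ eigenvalues'' with the operator-convex inversion $M \mapsto M^{-1}$, and is therefore convex. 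Since $\omega_0$ has constant coefficients, $M \mapsto P_{\omega_0}(M) = P_{I_n}(\omega_0^{-1}M)$ is convex as well. (For $n = 2$ everything is transparent, since $P_{\omega_0}(\delb u) \le c$ reads simply $\delb u \ge \tfrac1c\omega_0$.)

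For the implication $\chi \in \bar{\Gamma}_{\omega,c} \Rightarrow P_\omega(\chi) \le_v c$, I would run the argument sketched in the Remark along the lines of the proof of Proposition \ref{propvsub}. Fix a point $p$, a coordinate neighborhood, a local potential $\varphi$, and a constant-coefficient $\omega_0 \le \omega$. The hypothesis gives $P_{\omega_0}(\delb\varphi_\delta) \le c$ pointwise; since $\varphi_\delta$ is smooth, monotonicity of $P$ in the Hessian (an upper test $q$ at $x$ satisfies $\delb q(x) \ge \delb\varphi_\delta(x)$) upgrades this to $P_{\omega_0}(\delb\varphi_\delta) \le_v c$. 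Because $\varphi$ is psh, $\varphi_\delta \downarrow \varphi$, so $\{\varphi_\delta\}$ meets the hypotheses of Proposition \ref{propCIL} exactly as in the second half of that proof, and letting $\delta \to 0$ gives $P_{\omega_0}(\delb\varphi) \le_v c$. Choosing constant-coefficient $\omega_i \le \omega$ with $\omega_i(p) \to \omega(p)$ and using continuity of $P$ then yields $P_{\omega(p)}(\delb q(p)) \le c$ for every upper test $q$ of $\varphi$ at $p$.

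For the converse $P_\omega(\chi) \le_v c \Rightarrow \chi \in \bar{\Gamma}_{\omega,c}$, which is the genuinely new content, I would first reduce to constant coefficients: for a constant-coefficient $\omega_0 \le \omega$ and any upper test $q$ of $\varphi$ at $x$, monotonicity in the lower index gives $P_{\omega_0}(\delb q(x)) \le P_{\omega(x)}(\delb q(x)) \le c$, so $\varphi$ is a viscosity subsolution of the constant-coefficient operator $P_{\omega_0}(\delb\,\cdot\,) \le c$. It then remains to show that $\varphi_\delta$ is a classical subsolution, $P_{\omega_0}(\delb\varphi_\delta) \le c$. This is the regularization-by-convolution step: each translate of $\varphi$ is again a viscosity subsolution (constant coefficients), and convexity of $M \mapsto P_{\omega_0}(M)$ makes rigorous the Jensen-type inequality that $P$ of the average of the Hessians is at most the average of $P$ of the Hessians, which is the viscosity-theoretic form of the estimate \eqref{eqregularization}. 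Concretely, I would invoke the standard fact that convolution sends viscosity subsolutions of a convex constant-coefficient operator to viscosity subsolutions, and hence, by smoothness and monotonicity, to classical ones.

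The main obstacle is precisely this regularization lemma in the converse direction. Transferring the Jensen inequality from the smooth setting of \eqref{eqregularization} to a merely positive current $\chi$ genuinely requires the viscosity formulation, since $P_{\omega_0}$ cannot be evaluated pointwise on the singular coefficients of $\chi$. I would either cite the standard convolution lemma for convex operators (in the spirit of \cite{CIL}) or reproduce its short proof via second-order superjets, verifying in either case that the only structural inputs are the convexity and monotonicity of $P_{I_n}$ recorded at the start; the remaining steps are routine adaptations of Proposition \ref{propvsub}.
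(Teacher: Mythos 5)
Your overall strategy coincides with the paper's: one direction via monotonicity of $P$ and the decreasing-limit stability of Proposition \ref{propCIL} (exactly as in the second half of the proof of Proposition \ref{propvsub}), the other via reduction to a constant-coefficient $\omega_0\le\omega$, convexity of $P_{\omega_0}$, and a regularization lemma turning the viscosity inequality into the pointwise inequality $P_{\omega_0}(\delb\varphi_\delta)\le c$. You correctly isolate that regularization lemma as the crux, and the mechanism you gesture at (sup-convolution, second-order superjets, Jensen) is the one the paper actually uses, following the proof of \cite[Proposition 4.2]{Ishii}: the sup convolution $\varphi^\varepsilon$ is again a viscosity subsolution because $\omega_0$ has constant coefficients, it is semiconvex, hence twice differentiable a.e.\ by Aleksandrov's theorem, so the Jensen-type inequality for the mollification can be evaluated pointwise, and Proposition \ref{propCIL} then handles the decreasing limit $\varepsilon\to 0$.

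There is, however, one concrete step in your plan that would fail as stated: you propose to apply the ``standard convolution lemma'' directly to the local potential $\varphi$ of $\chi$, but $\varphi$ is psh and in general unbounded below, whereas the sup-convolution argument requires the supremum $\sup_y\{\varphi(y)-\tfrac{1}{2\varepsilon^2}|x-y|^2\}$ to be attained at a point within distance $A\varepsilon$ of $x$, which needs $A^2>2\,\mathrm{osc}\,\varphi$ and hence boundedness. Near the polar set of $\varphi$ the maximizing point can escape to the boundary of the chart and the transfer of the subsolution property breaks down. The paper repairs this by first truncating: it sets $\varphi_k=\sup\{\varphi,a\psi-k\}$, where $\psi$ is a local potential of $\omega$ and $a$ is chosen so that $P_\omega(a\omega)\le c$, so that $a\psi-k$ is itself a viscosity subsolution and the maximum of two subsolutions remains one; each $\varphi_k$ is bounded, the regularization lemma applies to it, and one then lets $k\to\infty$ with one more application of Proposition \ref{propCIL} to the decreasing sequence $\{(\varphi_k)_\delta\}_k$. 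Adding this truncation-and-limit step (and noting that it is also where the positivity of $\chi$, via pshness of upper tests as in \cite[Proposition 1.3]{EGZ}, enters) would close the gap; the rest of your outline matches the paper.
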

    \begin{proof}
        We follow the arguments in \cite[Proposition 1.11 and Theorem 1.9]{EGZ}. Assume $P_\omega(\chi)\le_v c$.
        Take an open subset $O$ of a coordinate neighborhood and a K{\"a}hler form $\omega_0$ with constant coefficients on $O$ such that $\omega_0\le\omega$. Note that an upper test of a psh function is psh as in the proof of \cite[Proposition 1.3]{EGZ}. Thus, we have $P_{\omega_0}(\chi)\le_v c$ on $O$. Let $\varphi$ and $\psi$ be upper semicontinuous local potentials of $\chi$ and $\omega$ respectively. Define $\varphi_k:=\sup\{\varphi,a\psi-k\}$, where $k$ is a positive integer and $a$ is a real number large enough to satisfy $P_\omega(a\omega)\le c$. Since the supremum of a finite number of viscosity subsolutions is a viscosity subsolution, we have $P_{\omega_0}(\varphi_k)\le_v c$ on $O$. Denote by $\varphi_k^\varepsilon$ the sup convolution of $\varphi_k$, i.e.
        $$\varphi_k^\varepsilon(x) = \sup \left\{ \varphi_k(y)-\frac{1}{2\varepsilon^2}|x-y|^2 \, \relmiddle{|} y \in O \right\}$$
        for $x\in O_{A_k \varepsilon}$, where $O_{A_k \varepsilon}:=\{x\in O\mid \mathrm{dist}(x,\partial O)<A_k \varepsilon\}$ and $A_k$ is a real number such that $A_k^2>2\, \mathrm{osc}_O \, \varphi_k$ which is well defined since $\varphi_k$ is bounded. By this definition of $A_k$, the supremum is attained in $B(x, A_k \varepsilon)$.
        Since $\omega_0$ has constant coefficients, from the proof of \cite[Proposition 4.2]{Ishii}, we see that a function $\varphi_k^\varepsilon$ satisfies $P_{\omega_0}(\delb \varphi_k^\varepsilon)\le_v c$ on $O_{A_k\varepsilon}$.
        Note that $\varphi_k^\varepsilon$ is semiconvex since the supremum of semiconvex functions is semiconvex. By Aleksandrov's theorem, $\varphi_k^\varepsilon$ is twice differentiable almost everywhere. 
        Thus, by the convexity of $P_{\omega_0}$, for $z \in O_{A_k\varepsilon+\delta}$, we have 
        \begin{align*}
            P_{\omega_0}(\delb(\varphi_k^\varepsilon)_\delta) (z)
            &= P_{\omega_0}\left(\int_
            {B_1} (\delb\varphi_k^\varepsilon)(z-\delta w)\rho(w)dw\right) \\
            &\le \int_ {B_1} 
            P_{\omega_0}(\delb\varphi_k^\varepsilon)(z-\delta w )\rho(w)dw \\
            &\le c.
        \end{align*}
        Since a sequence $\{\varphi_k^\varepsilon\}$ decreases to $\varphi_k$, the Lebesgue theorem implies that $\varphi_k^\varepsilon$ converges to $\varphi_k$ in $L^1$. Therefore, a sequence $\{(\varphi_k^\varepsilon)_\delta\}$ decreases to $(\varphi_k)_\delta$. By the same arguments as above, Proposition \ref{propCIL} confirms $P_{\omega_0}(\delb(\varphi_k)_\delta) \le c$ in $O_\delta$. By applying the same arguments to a sequence $\{(\varphi_k)_\delta\}_k$, as $k$ tends to $\infty$, we get $P_{\omega_0}(\delb\varphi_\delta)\le c$, which means $\chi\in \bar{\Gamma}_{\omega,c}$.
    \end{proof}
\end{remark}

\begin{remark}\label{remL1}
    The arguments in the proof of Proposition \ref{propvsub} also imply that an $L^1$-limit of viscosity subsolutions is a viscosity subsolution. This property was remarked in \cite[Remark 3.4]{GChen} in terms of $\bar{\Gamma}_{\omega,c}$.
\end{remark}

\subsection{The convergence to the weak solution}\label{secpp}
We expect that the viscosity subsolution obtained in Subsection \ref{secvsub} is a viscosity solution in some sense (e.g., \cite[Definition 3.2]{EGZ3}). Also, we expect that the viscosity solution is unique and equivalent to the weak solution in terms of non-pluripolar products (e.g. \cite{EGZ, EGZ2}). However, we were unable to prove these statements at this point. 

In dimension 2, the $J$-equation is equivalent to the Monge-Amp{\`e}re equation.
We prove that the viscosity subsolution obtained in Subsection \ref{secvsub} is the solution of the Monge-Amp{\`e}re equation constructed in \cite{BEGZ}.  
To prove this, we compare viscosity subsolutions and pluripotential subsolutions (i.e. in terms of non-pluripolar products) of the Monge-Amp{\`e}re equation. 
\cite[Corollary 2.6]{EGZ} claims the following:

\begin{lem}[{\cite[Corollary 2.6]{EGZ}}]\label{lemEGZ}
    Let $X$ be a $n$-dimensional compact K{\"a}hler manifold and $v$ a volume form with nonnegative continuous density. Let $\theta$ be a smooth closed real $(1,1)$-form whose cohomology class is big. Then a $\theta$-psh function $\varphi$ satisfies $(\theta+\delb\varphi)^n \ge v$ in the viscosity sense if and only if $\langle(\theta+\delb\varphi)^n\rangle\ge v$, where $\langle\cdot\rangle$ denotes the non-pluripolar product.
\end{lem}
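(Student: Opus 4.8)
The plan is to prove the equivalence locally and to split it into the case of bounded potentials (which is essentially \cite[Theorem 1.9]{EGZ}) and the general big case. Both the viscosity condition and the non-pluripolar product are local, so I would fix a small coordinate ball, write $\theta=\delb g$ for a smooth local potential $g$, and set $u=g+\varphi$, so that $u$ is psh and $\theta+\delb\varphi=\delb u$. The statement becomes: $u$ is a viscosity subsolution of $(\delb u)^n\ge v$ (in the EGZ convention) if and only if $\langle(\delb u)^n\rangle\ge v$. Exactly as in the proof of Proposition \ref{propequivalence}, I would first record that an upper test of a psh function has semipositive complex Hessian at the contact point, so that the determinant is unambiguous and the positivity convention in the definition causes no trouble.

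For the bounded case, first the direction viscosity $\Rightarrow$ pluripotential: I would regularize $u$ by its sup-convolution $u^\varepsilon$, which is semiconvex, decreases to $u$, and, as used above via \cite[Proposition 4.2]{Ishii}, remains a viscosity subsolution of $(\delb u^\varepsilon)^n\ge v^\varepsilon$ with $v^\varepsilon\to v$. By Aleksandrov's theorem $u^\varepsilon$ is twice differentiable almost everywhere, and at such points the subsolution inequality is the pointwise bound $\det(\delb u^\varepsilon)\ge v^\varepsilon$. Since $u^\varepsilon$ is semiconvex and psh, its Bedford--Taylor measure dominates the pointwise determinant, so $(\delb u^\varepsilon)^n\ge v^\varepsilon\,dV$; letting $\varepsilon\to0$ and using continuity of the Bedford--Taylor operator along decreasing sequences of bounded psh functions yields $(\delb u)^n\ge v$, which for bounded $u$ is $\langle(\delb u)^n\rangle\ge v$. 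For the converse I would argue by contradiction with the comparison principle: if the viscosity inequality failed at a point through an upper test $q$ with $\det(\delb q)<v$ nearby, a small quadratic perturbation of $q$ would touch $u$ from above with $\{u>q\}$ a nonempty ordinary-open set on which $u$ is pinched, hence bounded, and on which $(\delb q)^n<v\,dV\le(\delb u)^n$, contradicting the Bedford--Taylor comparison principle.

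For the big case I would remove boundedness by truncation. I would replace $\varphi$ by $\varphi_k:=\max(\varphi,\Phi-k)$, where $\Phi$ is a $\theta$-psh subsolution with minimal singularities (locally bounded on the ample locus $\Omega$), supplied in our application by the solution of the Monge--Amp\`ere equation. Since the supremum of viscosity subsolutions is a viscosity subsolution and $\det(\delb\,\cdot)$ is unchanged by additive constants, each $\varphi_k$ is again a viscosity subsolution, is locally bounded on $\Omega$, and decreases to $\varphi$. Applying the bounded case to $\varphi_k$ on $\Omega$, then invoking plurifine locality together with the monotone convergence of non-pluripolar products from \cite{BEGZ} on $\{\varphi>\Phi-k\}$, and using that $v$ is absolutely continuous and hence charges neither the pluripolar set $\{\varphi=-\infty\}$ nor the small complement $X\setminus\Omega$, lets me pass to the limit $k\to\infty$ and conclude $\langle(\theta+\delb\varphi)^n\rangle\ge v$. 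The reverse implication again follows from the local comparison argument restricted to the open set where the potential is bounded.

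The step I expect to be the main obstacle is precisely this big-class reconciliation: the viscosity condition is local for the ordinary topology, whereas the non-pluripolar product is local only for the plurifine topology and ignores pluripolar sets, so the delicate points are (i) arranging a globally defined truncation that is still a genuine subsolution, which is where the existence and minimal singularities of the solution enter, and (ii) ensuring that no Monge--Amp\`ere mass is lost as $k\to\infty$, for which the monotone-convergence results of \cite{BEGZ} are essential. In the bounded case the only nontrivial analytic input is the domination of the pointwise almost-everywhere determinant by the Bedford--Taylor measure of a semiconvex psh function.
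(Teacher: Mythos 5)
Your argument for the essential ``only if'' direction coincides with the paper's: truncate $\varphi$ by a minimally singular viscosity subsolution (the content of Assumption~\ref{asmp}, supplied in the application by the solution from \cite{BEGZ}), apply the local bounded case of \cite[Theorem 1.9]{EGZ} on the ample locus, where the continuous density $v$ carries all of its mass, and pass to the limit $k\to\infty$ via the monotone characterization of the non-pluripolar product. The only difference is one of scope rather than method: you additionally sketch a sup-convolution proof of the bounded local case and a comparison-principle argument for the converse implication, both of which the paper respectively cites as a black box and does not prove (or need), so the approaches are essentially the same.
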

\cite{EGZ} states that the lemma is demonstrated using a method similar to that of \cite[Theorem 1.9]{EGZ}, which asserts locally. However, here we provide a detailed proof of the ``only if'' part under the following assumption.
\begin{asmp}\label{asmp}
     There exists a $\theta$-psh function $\psi$ such that $(\theta+\delb \psi)^n\ge v$ in the viscosity sense on $\mathrm{Amp}(\theta)$ and $\psi$ has minimal singularities.
\end{asmp}

\begin{proof}[Proof of the ``only if'' part  of Lemma \ref{lemEGZ} under Assumption \ref{asmp}]
    Define $\varphi^{(k)}=\sup \{ \varphi, \psi-k \}$, where $\varphi$ and $\psi$ are the functions of the assumptions.  As the condition of viscosity subsolutions holds after taking the supremum of a finite number of them, we see that $(\theta+\delb{\varphi^{(k)}})^n \ge v$ in the viscosity sense on $\mathrm{Amp}(\theta)$. By assumption, the function $\varphi^{(k)}$ is locally bounded on $\mathrm{Amp}(\theta)$. Since $\mathrm{Amp}(\theta)$ is open, by the local method \cite[Theorem 1.9]{EGZ}, we get
    $$\langle(\theta+\delb\varphi^{(k)})^n\rangle\ge v \quad \text{on $\mathrm{Amp}(\theta)$}.$$
    Since $v$ puts no mass on $\mathrm{Amp}^c(\theta)$, we get
    $$\langle(\theta+\delb\varphi^{(k)})^n\rangle\ge v \quad \text{on $X$}.$$
    Recall that the non-pluripolar product can be characterized by
    $$\langle(\theta+\delb\varphi)^n\rangle = \lim_{k\rightarrow\infty}\mathbbm{1}_{\{\varphi >\varphi_{min}-k\}}\langle(\theta+\delb\sup \{ \varphi, \varphi_{min}-k \})^n\rangle,$$
    where $\varphi_{min}$ is a $\theta$-psh function with minimal singularities.
    If we put $\varphi_{min}=\psi$, the statement follows.
\end{proof}

We use the solution constructed in \cite{BEGZ}, which is known to satisfy Assumption \ref{asmp} by \cite[Theorems B and C]{BEGZ}.

\begin{prop}
    Let $X$ be a compact K{\"a}hler surface and a pair of K{\"a}hler classes $([\chi],[\omega])$ on $X$ be $J$-nef. If a positive current $T \in [\chi]$ satisfies $Q_\omega(T)\le_v c$, then $c\, T-\omega$ is a positive current and
    $$\langle (c\, T-\omega)^2\rangle=\omega^2.$$
\end{prop}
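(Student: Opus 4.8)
The plan is to pass to the class $[\theta] := [c\chi-\omega]$, which in dimension $2$ is nef and big, and to reformulate the whole statement in terms of a Monge--Amp\`ere equation. Writing $T=\chi+\delb\psi_\infty$ and setting $u:=c\psi_\infty$, one has $cT-\omega=\theta+\delb u$, so the two assertions become: $u$ is $\theta$-psh (positivity of $cT-\omega$), and $\langle(\theta+\delb u)^2\rangle=\omega^2$. The computational heart is an elementary eigenvalue identity special to surfaces: for a positive definite Hermitian matrix $\Xi$ with eigenvalues $\lambda_1,\lambda_2$ relative to $\omega$, the bound $Q_\omega(\Xi)=\tfrac1{\lambda_1}+\tfrac1{\lambda_2}\le c$ is equivalent to $c\Xi-\omega>0$ together with $(c\Xi-\omega)^2\ge\omega^2$, since $(c\lambda_1-1)(c\lambda_2-1)=c^2\lambda_1\lambda_2-c(\lambda_1+\lambda_2)+1$ while $\lambda_1+\lambda_2\le c\lambda_1\lambda_2$ forces both $c\lambda_i>1$ and the product to be $\ge 1$. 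I would use this identity twice: once for positivity and once to convert the $J$-subsolution into a Monge--Amp\`ere subsolution.

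For positivity I would regularize the viscosity subsolution exactly as in the proofs of Propositions \ref{propvsub} and \ref{propequivalence}: on a chart with a constant-coefficient K\"ahler form $\omega_0\le\omega$ and a local potential, produce smooth functions $u^\delta$ decreasing to the potential of $T$ and satisfying $Q_{\omega_0}(\delb u^\delta)\le c$ pointwise. Since $Q_{\omega_0}$ blows up at the boundary of the positive cone, this pointwise bound already forces $\delb u^\delta>0$, and then the eigenvalue identity gives $c\,\delb u^\delta-\omega_0>0$. Letting $\delta\to 0$ and then $\omega_0\uparrow\omega$, and using that weak limits of currents preserve positivity, yields $cT-\omega\ge 0$, i.e.\ $u$ is $\theta$-psh.

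Next I would prove $\langle(cT-\omega)^2\rangle\ge\omega^2$ as measures. The key observation is that $Q_\omega(T)\le_v c$ is precisely the statement that $cT-\omega=\theta+\delb u$ is a viscosity subsolution of $(\theta+\delb w)^2=\omega^2$: given an upper test $Q$ of the $\theta$-psh function $u$ at $x$ with $\Theta:=\theta(x)+\delb Q(x)\ge 0$, the function $q:=f+Q/c$ (with $f$ a local potential of $\chi$) is an upper test of the $\chi$-potential $f+\psi_\infty$ at $x$, and $\delb q(x)=\Xi:=(\Theta+\omega)/c>0$; the hypothesis then gives $Q_\omega(\Xi)\le c$, and the eigenvalue identity gives $\Theta^2=(c\Xi-\omega)^2\ge\omega^2$. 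This covers exactly the range where the Monge--Amp\`ere operator is elliptic. Since $[\theta]$ is big and the \cite{BEGZ} solution realizes Assumption \ref{asmp}, the ``only if'' direction of Lemma \ref{lemEGZ} upgrades this viscosity property to the non-pluripolar inequality $\langle(cT-\omega)^2\rangle\ge\omega^2$.

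Finally I would close the gap by a mass computation. By \cite[Proposition 1.20]{BEGZ}, $\int_X\langle(cT-\omega)^2\rangle\le\mathrm{vol}[c\chi-\omega]$; since $[c\chi-\omega]$ is nef, its volume equals its self-intersection, and using $c=2[\omega]\cdot[\chi]/[\chi]^2$ one checks $(c[\chi]-[\omega])^2=[\omega]^2=\int_X\omega^2$. Hence $\int_X\langle(cT-\omega)^2\rangle\le\int_X\omega^2$, while integrating the pointwise bound of the previous step gives the reverse inequality. Equality of the total masses then forces the nonnegative measure $\langle(cT-\omega)^2\rangle-\omega^2$ to have zero mass, hence to vanish, which is the desired identity $\langle(cT-\omega)^2\rangle=\omega^2$. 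I expect the main obstacle to be the viscosity-to-pluripotential passage: correctly matching the two subsolution notions through the eigenvalue identity (in particular handling the positive cone and the potentials), and verifying that Lemma \ref{lemEGZ} genuinely applies here, i.e.\ that the \cite{BEGZ} solution supplies the minimal-singularity viscosity subsolution demanded by Assumption \ref{asmp}.
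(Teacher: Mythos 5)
Your proposal is correct and follows essentially the same route as the paper: positivity of $c\,T-\omega$ via regularization of the viscosity subsolution (the paper routes this through $P_\omega\le Q_\omega$ and Proposition \ref{propequivalence}), the dimension-two eigenvalue identity converting $Q_\omega(T)\le_v c$ into the viscosity Monge--Amp\`ere inequality $(c\,T-\omega)^2\ge\omega^2$, the upgrade to the non-pluripolar inequality via Lemma \ref{lemEGZ} with the \cite{BEGZ} solution supplying Assumption \ref{asmp}, and the closing mass comparison from \cite[Proposition 1.20]{BEGZ} together with $(c[\chi]-[\omega])^2=[\omega]^2$. The extra details you supply (the explicit identity $(c\lambda_1-1)(c\lambda_2-1)\ge 1$ and the matching of upper tests) are exactly the computations the paper leaves implicit.
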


\begin{proof}
    Since $P_\omega \le Q_\omega$ and an upper test of a psh function is psh as in the proof of \cite[Proposition 1.3]{EGZ}, we have $P_\omega(T)\le_v c$. By Proposition \ref{propequivalence}, we have $T \in \bar{\Gamma}_{\omega,c}$. Equivalently, the current $c\, T-\omega$ is a positive current since $\delb\varphi_\delta \rightarrow \delb\varphi=T$ as $\delta \rightarrow 0$. 
    We can also see that $Q_\omega(T) \le_v c$ is equivalent to $(c\, T-\omega)^2 \ge \omega^2$ in the viscosity sense. 
    By using Lemma \ref{lemEGZ}, we see that $\langle(c \, T-\omega)^2 \rangle \ge \omega^2$. 
    On the other hand, by \cite[Proposition 1.20]{BEGZ}, we have
    $$\int_X \langle(c\, T-\omega)^2\rangle \le \langle[c\chi-\omega]^2\rangle = \int_X\omega^2,$$ 
    where for the equality, we use the assumption that $[c\chi-\omega]$ is nef. Hence, we obtain $\langle(c \, T-\omega)^2\rangle =\omega^2$.
\end{proof}

By combining the propositions we have obtained so far and the uniqueness of the solution of the Monge-Amp{\`e}re equation \cite[Theorem A]{BEGZ}, we get the main theorem.

\section{The dHYM flow}\label{secdHYM}
We prove Theorem \ref{thmdHYMmain} using the same strategy as in the previous section. In the whole section, we assume $0<\theta_0<\pi$. We start with a variant of Lemma \ref{lemconvexity}. Recall that the $\mathcal{J}$-functional in the context of the dHYM equation is defined by
\begin{equation*}
    \mathcal{J}(0)=0, \quad d\mathcal{J}(\varphi)(\psi)=\int_X\psi \, \mathrm{Im}\left(e^{-\sqrt{-1}\theta_0}(\alpha_\varphi+\sqrt{-1}\omega)^n\right).
\end{equation*}

\begin{lem}\label{lemdhymconvexity}
    The $\mathcal{J}$-functional is convex along the dHYM flow with an initial point $\varphi_0$ satisfying $0<\theta_\omega(\alpha_0)<\pi$.
\end{lem}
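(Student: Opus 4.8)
The plan is to mirror the computation in the proof of Lemma~\ref{lemconvexity}, reducing the convexity of the $\mathcal{J}$-functional along the dHYM flow to an application of the same analytic inequality (Lemma~\ref{lemhsmt}) after identifying the correct ``background'' metric in which the trace operator is taken. Writing $f(\alpha_t) = \cot\theta_\omega(\alpha_t)$, the flow is $\dot\varphi_t = f(\alpha_t) - \cot\theta_0$, and by definition of $d\mathcal{J}$ we have $\frac{d}{dt}\mathcal{J}(\varphi_t) = \int_X \dot\varphi_t\,\mathrm{Im}\!\left(e^{-\sqrt{-1}\theta_0}(\alpha_t + \sqrt{-1}\omega)^n\right)$. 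First I would verify the infinitesimal identity $\mathrm{Im}\!\left(e^{-\sqrt{-1}\theta_0}(\alpha_t+\sqrt{-1}\omega)^n\right) = \big(f(\alpha_t)-\cot\theta_0\big)\,\tilde\omega_t^n \cdot (\text{const})$ for an appropriate rescaling, where $\tilde\omega_t$ is the metric naturally associated to the dHYM operator; concretely, $(\alpha+\sqrt{-1}\omega)^n = \prod_i(\lambda_i+\sqrt{-1})\,\omega^n$ and its imaginary part (after multiplying by $e^{-\sqrt{-1}\theta_0}$) is governed by $\sin(\theta_0 - \theta_\omega(\alpha))$, which vanishes exactly where $\dot\varphi=0$ and has the correct sign. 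This lets me rewrite $\frac{d}{dt}\mathcal{J}(\varphi_t) = -\int_X \dot\varphi_t^2\, G_t\,\omega^n$ for a positive density $G_t$, so $\mathcal{J}$ is decreasing, and the task becomes showing the second derivative is nonnegative.

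Next I would differentiate again. Following the structure of Lemma~\ref{lemconvexity}, $\frac{d^2}{dt^2}\mathcal{J}(\varphi_t)$ splits into a term with $\ddot\varphi_t$ and a term coming from the time derivative of the measure $G_t\,\omega^n$. The key is to linearize the operator $f(\alpha_t) = \cot\theta_\omega(\alpha_t)$: its derivative in the direction $\delb\dot\varphi$ is an elliptic operator of the form $\langle A_t, \delb\dot\varphi\rangle$, where $A_t$ is the positive-definite Hermitian endomorphism $A_t = \sum_i \frac{1}{1+\lambda_i^2}\,(\text{projection onto the $i$-th eigenspace})$ built from the eigenvalues $\lambda_i$ of $\omega^{-1}\alpha_t$. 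This is the dHYM analogue of the metric $\chi_t$ in the $J$-flow. Under the supercritical assumption $0<\theta_\omega(\alpha_t)<\pi$ — which is preserved along the flow and is what makes the operator uniformly elliptic and the relevant cone convex — I expect the second-derivative expression to collapse, after an integration by parts, into the shape $-2\int_X \dot\varphi_t\, F_{\tilde\chi_t,\omega}(\dot\varphi_t)\,\tilde\chi_t^n$ for a suitable Kähler form $\tilde\chi_t$ determined by $A_t$, whereupon Lemma~\ref{lemhsmt} yields nonnegativity.

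The main obstacle, and the place where the dHYM case genuinely differs from the $J$-flow, is the linearization bookkeeping: unlike $\Lambda_{\chi_t}\omega$, the operator $\cot\theta_\omega(\alpha_t)$ is a nonlinear symmetric function of the eigenvalues whose derivative does not reduce to a single clean trace, so I must check that the first-order part of $\frac{d}{dt}f(\alpha_t)$ really does assemble into the divergence-form operator $F_{\tilde\chi_t,\omega}$ with the correct background metric, and that the lower-order commutator terms (arising because $A_t$ itself varies and the form $\alpha_t$ need not be positive) either cancel or combine into the $(\partial\Lambda\,,\,\bar\partial)$ term that Lemma~\ref{lemhsmt} is designed to absorb. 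I would handle this by computing in a frame diagonalizing $\omega^{-1}\alpha_t$ at a point and carefully tracking the Hessian terms, using the supercritical condition to guarantee $A_t>0$ so that $\tilde\chi_t$ is a genuine Kähler form and Lemma~\ref{lemhsmt} applies. The positivity of $G_t$ needed in the first step also relies on $0<\theta_\omega(\alpha_t)<\pi$, so I would record that this condition is preserved by the flow (via the maximum principle, as for the $J$-flow) before invoking it.
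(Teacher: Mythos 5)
Your overall skeleton (differentiate twice, substitute the flow equation for $\ddot\varphi_t$, integrate by parts, invoke preservation of $0<\theta_\omega(\alpha_t)<\pi$) matches the paper, and your first-derivative identity $\frac{d}{dt}\mathcal{J}(\varphi_t)=-\sin\theta_0\int_X\dot\varphi_t^2\,\mathrm{Im}(\alpha_t+\sqrt{-1}\omega)^n\le 0$ is correct. But the step you yourself flag as ``the main obstacle'' --- collapsing the second derivative into $-2\int_X\dot\varphi_t\,F_{\tilde\chi_t,\omega}(\dot\varphi_t)\,\tilde\chi_t^n$ so as to apply Lemma~\ref{lemhsmt} --- is a genuine gap, and it is not how the argument goes. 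Hashimoto's inequality is tailored to the operator $F_{\chi,\omega}(\varphi)=(\delb\varphi,\omega)_\chi+(\partial\Lambda_\chi\omega,\bar\partial\varphi)_\chi$, whose commutator term is produced by the \emph{closedness} of both $\chi$ and $\omega$. To fit the dHYM linearization into that mold you would need closed forms $\tilde\chi_t,\tilde\omega_t$ with simultaneously $\Lambda_{\tilde\chi_t}\tilde\omega_t=\cot\theta_\omega(\alpha_t)+\mathrm{const}$ (to match the $\partial\cot\theta_\omega(\alpha_t)$ gradient term coming from $\ddot\varphi_t$) and $\tilde\chi_t^{-1}\tilde\omega_t\tilde\chi_t^{-1}=\csc^2\theta_\omega(\alpha_t)\,\mathrm{diag}\bigl(\tfrac{1}{1+\lambda_i^2}\bigr)$ (to match the principal symbol). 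This is an overdetermined system with no evident solution, so ``I expect the expression to collapse'' cannot stand as written.

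The paper avoids this entirely: no analogue of Lemma~\ref{lemhsmt} is needed, because after integration by parts the second derivative becomes
\[
\frac{d^2}{dt^2}\mathcal{J}(\varphi_t)=2\sin\theta_0\int_X n\,\sqrt{-1}\partial\dot\varphi_t\wedge\bar\partial\dot\varphi_t\wedge\Bigl(\mathrm{Re}(\alpha_t+\sqrt{-1}\omega)^{n-1}-\cot\theta_\omega(\alpha_t)\,\mathrm{Im}(\alpha_t+\sqrt{-1}\omega)^{n-1}\Bigr),
\]
and the $(n-1,n-1)$-form in parentheses is shown to be \emph{pointwise} positive against $\sqrt{-1}\partial\dot\varphi_t\wedge\bar\partial\dot\varphi_t$ by a direct eigenvalue computation: at a point where $\omega$ is normal and $\alpha_t$ is diagonal with eigenvalues $\lambda_i$, the integrand equals $n!\bigl(\sum_i\tfrac{\partial_i\dot\varphi_t\,\partial_{\bar i}\dot\varphi_t}{\lambda_i^2+1}\bigr)\tfrac{\prod_i(\lambda_i^2+1)}{\mathrm{Im}\prod_i(\lambda_i+\sqrt{-1})}\prod_i\sqrt{-1}dz^i\wedge d\bar z^i$, which is nonnegative precisely because $\mathrm{Im}\prod_i(\lambda_i+\sqrt{-1})=\bigl(\prod_i\sqrt{\lambda_i^2+1}\bigr)\sin\theta_\omega(\alpha_t)>0$ under the preserved condition $0<\theta_\omega(\alpha_t)<\pi$. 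This is in fact a stronger (pointwise rather than integral) positivity than in the $J$-flow case, which is exactly why the dHYM argument does not need an integral inequality of Hashimoto type. To repair your proof, replace the appeal to Lemma~\ref{lemhsmt} by this direct positivity computation.
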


\begin{remark}
    An initial point as in the statement exists since the same proof as \cite[Lemma 5.2]{FYZ} works for our case. 
\end{remark}

\begin{proof}[Proof of Lemma \ref{lemdhymconvexity}]
    We can compute as
    \begin{align*}
        \frac{d^2}{dt^2}\mathcal{J}(\varphi)
        &= \frac{d}{dt}\int_X \dot{\varphi} \, \mathrm{Im}\left(e^{-\sqrt{-1}\theta_0}(\alpha_\varphi+\sqrt{-1}\omega)^n\right) \\
        &= \sin{\theta_0} \frac{d}{dt}\int_X \dot{\varphi} \left(\cot{\theta_0} \mathrm{Im}(\alpha_\varphi+\sqrt{-1}\omega)^n - \mathrm{Re}(\alpha_\varphi+\sqrt{-1}\omega)^n\right) \\
        &=-\sin{\theta_0}\frac{d}{dt}\int_X \dot{\varphi}^2 \, \mathrm{Im}(\alpha_\varphi+\sqrt{-1}\omega)^n \\
        &= -\sin{\theta_0} \left\{ \int_X 2\, \dot{\varphi}\, \ddot{\varphi} \, \mathrm{Im}(\alpha_\varphi+\sqrt{-1}\omega)^n \right. \\
        &\hspace{2.5cm} \left. +\int_X \dot{\varphi}^2 n \, \delb\dot{\varphi} \wedge \mathrm{Im}(\alpha_\varphi+\sqrt{-1}\omega)^{n-1} \right\}\\
        &= -2\sin{\theta_0} \left\{ \int_X  \dot{\varphi}\, \ddot{\varphi} \, \mathrm{Im}(\alpha_\varphi+\sqrt{-1}\omega)^n \right. \\
        &\hspace{2.5cm} \left. -\int_X \dot{\varphi} \, n\sqrt{-1}\partial\cot{\theta_\omega(\alpha_\varphi)} \wedge \bar{\partial}\dot{\varphi}\wedge \mathrm{Im}(\alpha_\varphi+\sqrt{-1}\omega)^{n-1} \right\}.
    \end{align*}
    By the definition of the dHYM flow \eqref{eqdhymflow}, we also have
    \begin{align*}
        \ddot{\varphi} 
        &= \frac{\frac{d}{dt}\mathrm{Re}(\alpha_\varphi+\sqrt{-1}\omega)^n}{\mathrm{Im}(\alpha_\varphi+\sqrt{-1}\omega)^n}-\cot{\theta_\omega(\alpha_\varphi)}\frac{\frac{d}{dt}\mathrm{Im}(\alpha_\varphi+\sqrt{-1}\omega)^n}{\mathrm{Im}(\alpha_\varphi+\sqrt{-1}\omega)^n}\\
        &=\frac{n\delb\dot{\varphi}\wedge\mathrm{Re}(\alpha_\varphi+\sqrt{-1}\omega)^{n-1}}{\mathrm{Im}(\alpha_\varphi+\sqrt{-1}\omega)^n} \\
        &\quad -\cot{\theta_\omega(\alpha_\varphi)}\frac{n\delb\dot{\varphi}\wedge\mathrm{Im}(\alpha_\varphi+\sqrt{-1}\omega)^{n-1}}{\mathrm{Im}(\alpha_\varphi+\sqrt{-1}\omega)^n}.
    \end{align*}
    By inserting this to the above, we get
    \begin{align*}
        \frac{d^2}{dt^2}\mathcal{J}(\varphi)
        &= -2\sin{\theta_0} \left\{ \int_X  \dot{\varphi}\, \left( n\delb\dot{\varphi}\wedge\mathrm{Re}(\alpha_\varphi+\sqrt{-1}\omega)^{n-1}  \right. \right. \\
        &\hspace{3cm} \left. \left. - \cot{\theta_\omega(\alpha_\varphi)} n\delb\dot{\varphi}\wedge\mathrm{Im}(\alpha_\varphi+\sqrt{-1}\omega)^{n-1}
        \right) \right. \\
        &\hspace{2.5cm} \left. -\int_X \dot{\varphi} \, n\sqrt{-1}\partial\cot{\theta_\omega(\alpha_\varphi)} \wedge \bar{\partial}\dot{\varphi}\wedge \mathrm{Im}(\alpha_\varphi+\sqrt{-1}\omega)^{n-1} \right\}.
        \end{align*}
    Applying integration by parts to the first and second terms, we get
    \begin{align*}
        \frac{d^2}{dt^2}\mathcal{J}(\varphi)
        &= 2\sin{\theta_0} \left( \int_X n\, \sqrt{-1}\partial\dot{\varphi}\wedge\bar{\partial}\dot{\varphi}\right.\\
        &\hspace{2cm}\biggl. \wedge \left( \mathrm{Re}(\alpha_\varphi+\sqrt{-1}\omega)^{n-1}- \cot{\theta_\omega(\alpha_\varphi)}\mathrm{Im}(\alpha_\varphi+\sqrt{-1}\omega)^{n-1}\right)\biggr).
    \end{align*}
    We claim that 
    $$ n\sqrt{-1}\partial\dot{\varphi}\wedge \bar{\partial}\dot{\varphi}\wedge \left( \mathrm{Re}(\alpha_\varphi+\sqrt{-1}\omega)^{n-1}- \cot{\theta_\omega(\alpha_\varphi)}\mathrm{Im}(\alpha_\varphi+\sqrt{-1}\omega)^{n-1}\right) \ge0.$$
    Indeed, take a point $p \in X$ and a coordinate which is normal for $\omega$ at $p$ and on which $\alpha_\varphi(p)=\sum_i \lambda_i \sqrt{-1}dz^i\wedge d\bar{z}^i$. Then, at $p$, we have
    \begin{align*}
        &n\sqrt{-1}\partial\dot{\varphi}\wedge \bar{\partial}\dot{\varphi}\wedge \left( \mathrm{Re}(\alpha_\varphi+\sqrt{-1}\omega)^{n-1}- \cot{\theta_\omega(\alpha_\varphi)}\mathrm{Im}(\alpha_\varphi+\sqrt{-1}\omega)^{n-1}\right)\\
        =\, &n!\left\{ \mathrm{Re}\Biggl( \left(\sum_i \frac{\partial_i\dot{\varphi} \, \partial_{\bar{i}}\dot{\varphi}}{\lambda_i+\sqrt{-1}}\right)\prod_i(\lambda_i+\sqrt{-1})\sqrt{-1}dz^i\wedge d\bar{z}^i\Biggr)\right.\\
        &\left. \quad -\frac{\mathrm{Re}\prod_i\lambda_i+\sqrt{-1}}{\mathrm{Im}\prod_i\lambda_i+\sqrt{-1}}\mathrm{Im}\left(\left(\sum_i \frac{\partial_i\dot{\varphi} \, \partial_{\bar{i}}\dot{\varphi}}{\lambda_i+\sqrt{-1}}\right)\prod_i(\lambda_i+\sqrt{-1})\sqrt{-1}dz^i\wedge d\bar{z}^i\right)\right\} \\
        =\, &n!\left\{\left(\sum_i\frac{\lambda_i\, \partial_i\dot{\varphi}\, \partial_{\bar{i}}\dot{\varphi}}{\lambda_i^2+1}\right) \mathrm{Re}\prod_i(\lambda_i+\sqrt{-1}) +\left(\sum_i\frac{\partial_i\dot{\varphi}\, \partial_{\bar{i}}\dot{\varphi}}{\lambda_i^2+1}\right)\mathrm{Im}\prod_i(\lambda_i+\sqrt{-1})\right.\\
        &\left. \quad -\frac{\mathrm{Re}\prod_i\lambda_i+\sqrt{-1}}{\mathrm{Im}\prod_i\lambda_i+\sqrt{-1}}\left(\sum_i\frac{\lambda_i\, \partial_i\dot{\varphi}\, \partial_{\bar{i}}\dot{\varphi}}{\lambda_i^2+1}\right)\mathrm{Im}\prod_i(\lambda_i+\sqrt{-1})\right.\\
        &\left. \quad +\frac{\mathrm{Re}\prod_i\lambda_i+\sqrt{-1}}{\mathrm{Im}\prod_i\lambda_i+\sqrt{-1}} \left(\sum_i\frac{\partial_i\dot{\varphi}\, \partial_{\bar{i}}\dot{\varphi}}{\lambda_i^2+1}\right)\mathrm{Re}\prod_i(\lambda_i+\sqrt{-1})
        \right\}\prod_i \sqrt{-1}dz^i\wedge d\bar{z}^i \\
        = &n!\left( \sum_i \frac{\partial_i\dot{\varphi}\, \partial_{\bar{i}}\dot{\varphi}}{\lambda_i^2+1}\right) \frac{\prod_i(\lambda_i^2+1)}{\left(\mathrm{Im}\prod_i\lambda_i+\sqrt{-1}\right)}\prod_i \sqrt{-1}dz^i\wedge d\bar{z}^i.
    \end{align*}
    Note that if $ 0 < \theta_\omega(\alpha_{\varphi_0})<\pi$, then $0 < \theta_\omega(\alpha_\varphi)< \pi$ for all $t \in [0,\infty)$ by \cite[Lemma 3.2]{FYZ}. In particular, we have
    \begin{equation*}
        \mathrm{Im}\prod_i\lambda_i+\sqrt{-1}=\left(\prod_i\lambda_i^2+1\right)\sin\theta_\omega(\alpha_\varphi)>0.
    \end{equation*}
    Therefore, we conclude that the $\mathcal{J}$-functional is convex along the dHYM flow if the initial point satisfies $0 <\theta_\omega(\alpha_{\varphi_0}) < \pi$.
\end{proof}

\begin{prop}
    Let $X$ be a compact K{\"a}hler surface and $\omega$ a K{\"a}hler form. Let $\alpha$ be a smooth closed real $(1,1)$-form. Assume that $0 <\theta_0 < \pi$ and the class $[\alpha-\cot{\theta_0}\omega]$ is nef. If an initial point of the dHYM flow satisfies $0 < \theta_\omega(\alpha_{\varphi_0}) < \pi$, then the time derivative of the dHYM flow converges to zero in $L^2$.
\end{prop}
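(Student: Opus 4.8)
The plan is to transcribe the proof of Proposition \ref{propslope} into the dHYM setting, replacing $\chi_t^n$ by the positive $(n,n)$-form $\mathrm{Im}(\alpha_t+\sqrt{-1}\omega)^n$ and $\mathrm{E}^\chi_\chi$ by a suitable Aubin--Mabuchi-type energy. First, from the computation in the proof of Lemma \ref{lemdhymconvexity} one reads off, after using the flow equation $\dot\varphi_t=\cot\theta_\omega(\alpha_t)-\cot\theta_0$ together with the pointwise identity $\mathrm{Re}(\alpha_t+\sqrt{-1}\omega)^n=\cot\theta_\omega(\alpha_t)\,\mathrm{Im}(\alpha_t+\sqrt{-1}\omega)^n$, the identity
\[\frac{d}{dt}\mathcal{J}(\varphi_t)=-\sin\theta_0\int_X\dot\varphi_t^2\,\mathrm{Im}(\alpha_t+\sqrt{-1}\omega)^n\le 0,\]
where the sign is fixed by $\sin\theta_0>0$ and $\mathrm{Im}(\alpha_t+\sqrt{-1}\omega)^n>0$ (the latter because $0<\theta_\omega(\alpha_t)<\pi$ for all $t$ by \cite[Lemma 3.2]{FYZ}). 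By Lemma \ref{lemdhymconvexity}, $\frac{d}{dt}\mathcal{J}(\varphi_t)$ is nondecreasing and $\le 0$, hence converges to some $\mu\le 0$; as before the goal is to show $\mu=0$.

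Next I would assemble the three uniform bounds used in Proposition \ref{propslope}. (a) A mean-value identity: since $\int_X\mathrm{Im}(\alpha_t+\sqrt{-1}\omega)^n$ and $\int_X\mathrm{Re}(\alpha_t+\sqrt{-1}\omega)^n$ are cohomological constants and $\theta_0=\arg\int_X(\alpha+\sqrt{-1}\omega)^n$, one gets $\int_X\dot\varphi_t\,\mathrm{Im}(\alpha_t+\sqrt{-1}\omega)^n=\int_X\mathrm{Re}(\alpha_t+\sqrt{-1}\omega)^n-\cot\theta_0\int_X\mathrm{Im}(\alpha_t+\sqrt{-1}\omega)^n=0$. (b) A maximum-principle bound: $\dot\varphi_t$ satisfies a linear parabolic equation with no zeroth-order term, so $\|\dot\varphi_t\|_\infty\le\|\dot\varphi_0\|_\infty=C$; since $\theta_\omega(\alpha_0)$ is bounded away from $0$ and $\pi$ by compactness, the bound $|\cot\theta_\omega(\alpha_t)-\cot\theta_0|\le C$ forces $\theta_\omega(\alpha_t)\in[\delta,\pi-\delta]$ uniformly in $t$. (c) Linear growth of an energy: letting $\mathrm{E}$ be the functional with $d\mathrm{E}(\varphi)(\psi)=\int_X\psi\,\mathrm{Im}(\alpha_\varphi+\sqrt{-1}\omega)^n$, the total mass $\int_X\mathrm{Im}(\alpha_t+\sqrt{-1}\omega)^n$ is a cohomological constant, so $|\frac{d}{dt}\mathrm{E}(\varphi_t)|\le C\int_X\mathrm{Im}(\alpha_t+\sqrt{-1}\omega)^n$ is bounded, giving $\mathrm{E}(\varphi_t)\le A+Ct$.

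Then I would invoke the coercivity estimate for the dHYM $\mathcal{J}$-functional supplied by the nefness of $[\alpha-\cot\theta_0\omega]$: for every $\varepsilon>0$ there is $C_\varepsilon$ with $\mathcal{J}(\varphi)\ge-\varepsilon\,\mathrm{E}(\varphi)-C_\varepsilon$ on the relevant class of potentials (those with $\theta_\omega(\alpha_\varphi)\in(0,\pi)$, which the flow preserves). Combining with (c) yields $\mathcal{J}(\varphi_t)\ge-C\varepsilon t-C_\varepsilon-A\varepsilon$; dividing by $t$ and letting $t\to\infty$ gives $\mu\ge-C\varepsilon$, and since $\varepsilon$ is arbitrary and $\mu\le 0$ we conclude $\mu=0$. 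Finally, the uniform phase bound from (b) gives $\mathrm{Im}(\alpha_t+\sqrt{-1}\omega)^n\ge(\sin\delta)\,\omega^n$, so $\mu=0$ together with the identity of the first paragraph forces $\int_X\dot\varphi_t^2\,\omega^n\to 0$, i.e.\ the time derivative tends to zero in $L^2(\omega)$.

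The main obstacle is the coercivity estimate of the third paragraph, the dHYM counterpart of \cite[Theorem 23]{SD} that was available off the shelf for the $J$-flow. I expect this to require the nef (boundary) analog of the solvability theory of \cite{CLT}, relating the lower bound of $\mathcal{J}$ to the intersection numbers over subvarieties underlying the nefness of $[\alpha-\cot\theta_0\omega]$; matching the precise energy functional appearing there with the $\mathrm{E}$ that grows linearly along the flow is the delicate point. A secondary technical point is the uniform two-sided bound on the phase $\theta_\omega(\alpha_t)$ in (b), which is exactly what upgrades convergence against $\mathrm{Im}(\alpha_t+\sqrt{-1}\omega)^n$ to convergence in $L^2(\omega)$.
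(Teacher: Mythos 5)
Your skeleton matches the paper's: the sign identity $\frac{d}{dt}\mathcal{J}(\varphi_t)=-\sin\theta_0\int_X\dot\varphi_t^2\,\mathrm{Im}(\alpha_t+\sqrt{-1}\omega)^n\le 0$, convexity from Lemma \ref{lemdhymconvexity}, the limit slope $\mu\le 0$, the constancy of $\int_X\dot\varphi_t\,\mathrm{Im}(\alpha_t+\sqrt{-1}\omega)^n$, the phase bounds from \cite[Lemma 3.2]{FYZ}, and the final upgrade from convergence against $\mathrm{Im}(\alpha_t+\sqrt{-1}\omega)^n$ to $L^2(\omega)$. But the step you flag as ``the main obstacle'' --- the coercivity estimate $\mathcal{J}(\varphi)\ge-\varepsilon\,\mathrm{E}(\varphi)-C_\varepsilon$ --- is precisely the content of the proposition, and you leave it unproved, hoping for a dHYM analogue of \cite[Theorem 23]{SD} that is not in the literature. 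This is a genuine gap, and it is compounded by a problem with your choice of $\mathrm{E}$: the functional with $d\mathrm{E}(\varphi)(\psi)=\int_X\psi\,\mathrm{Im}(\alpha_\varphi+\sqrt{-1}\omega)^n$ is $\mathrm{Im}\,\mathrm{CY}_{\mathbb{C}}$, which by your own point (a) is \emph{constant} along the flow. A coercivity inequality against a flow-invariant functional would force $\mathcal{J}$ to be bounded below outright, which is exactly the statement the paper says remains elusive even in the $J$-flow case; so the inequality you posit is almost certainly unprovable in that form, and your step (c) (linear growth of $\mathrm{E}$) is vacuous rather than the analogue of the $\mathrm{E}^\chi_\chi$ estimate in Proposition \ref{propslope}.

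The paper closes this gap by a different mechanism that avoids any Sj{\"o}str{\"o}m Dyrefelt-type theorem. It introduces the perturbed operator $F_{\omega,-\varepsilon}$ of \cite{CL} and the functional $\mathcal{J}_{-\varepsilon}$ whose Euler--Lagrange equation, in dimension $2$, is the Monge--Amp{\`e}re equation $\left(\alpha_\varphi-(\cot\theta_0-a_0\varepsilon)\omega\right)^2=\left((\cot\theta_0-a_0\varepsilon)^2+1+\varepsilon\right)\omega^2$ in the class $[\alpha-(\cot\theta_0-a_0\varepsilon)\omega]$, which is K{\"a}hler precisely because $[\alpha-\cot\theta_0\omega]$ is nef and $a_0\varepsilon>0$. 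Yau's theorem \cite{Yau} produces the critical point, the convergence of the twisted dHYM flow \eqref{eqtdHYMflow} from \cite{CL} shows $\mathcal{J}_{-\varepsilon}$ is bounded below on $\mathcal{H}_\pi$ by its critical value, and the explicit comparison $\mathcal{J}_{-\varepsilon}(\varphi_t)-\mathcal{J}_{-\varepsilon}(\varphi_0)=\frac{1}{\sin\theta_0}\mathcal{J}(\varphi_t)+\varepsilon\int_X\varphi_t\,\omega^n$ together with the bound $\sup\varphi_t\le Ct$ yields $\mathcal{J}(\varphi_t)\ge-\varepsilon Ct-C_\varepsilon$, after which your $\varepsilon$-argument applies verbatim. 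If you want to complete your proof, you should replace the hypothetical coercivity input by this $\varepsilon$-perturbation: it is where the nefness hypothesis and the restriction to surfaces actually enter.
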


\begin{proof}
    Note that
    \begin{align}\label{eqdhymderiv}
        \frac{d}{dt}\mathcal{J}(\varphi_t)
        &=\int_X \dot{\varphi_t} \, \mathrm{Im}\left(e^{-\sqrt{-1}\theta_0}(\alpha_\varphi+\sqrt{-1}\omega)^n\right) \notag\\
        &=\sin{\theta_0}\int_X \dot{\varphi}_t \left(\cot{\theta_0}\, \mathrm{Im}(\alpha_\varphi+\sqrt{-1}\omega)^n-\mathrm{Re}(\alpha_\varphi+\sqrt{-1}\omega)^n
        \right) \notag\\
        &=-\sin{\theta_0}\int_X \dot{\varphi}_t^2 \, \mathrm{Im}(\alpha_\varphi+\sqrt{-1}\omega)^n \le0.
    \end{align}
    As in \cite{CL}, let $F_{\omega,-\varepsilon}$ be the operator defined by
    $$F_{\omega,-\varepsilon}(\alpha_\varphi)= \frac{\mathrm{Re}(\alpha_\varphi+\sqrt{-1}\omega)^n-\varepsilon\omega^n}{\mathrm{Im}(\alpha_\varphi+\sqrt{-1}\omega)^n}.$$
    Also, let $\mathcal{J}_{-\varepsilon}$ be the functional defined by
    $$\mathcal{J}_{-\varepsilon}(0)=0, \quad d\mathcal{J}_{-\varepsilon}(\varphi)(\psi)=\int_X \psi \, \left(\cot{\theta_0}-a_0\varepsilon-F_{\omega,-\varepsilon}(\alpha_\varphi)\right)\mathrm{Im}(\alpha_\varphi+\sqrt{-1}\omega)^n,$$
    where $a_0$ is the constant determined by
    $$a_0=\frac{\int_X\omega^n}{\int_X \mathrm{Im}(\alpha+\sqrt{-1}\omega)^n}.$$
    We claim that if the class $[\alpha-\cot{\theta_0}\omega]$ is nef and big, then for any $\varepsilon>0$, the functional $\mathcal{J}_{-\varepsilon}$ is bounded from below on 
    $$\mathcal{H}_\pi=\{ \varphi \in C^\infty(X) \mid  0 < \theta_\omega(\alpha_\varphi) < \pi\}.$$
    Note that the critical point $\varphi$ of $\mathcal{J}_{-\varepsilon}$ satisfies
    $$F_{\omega,-\varepsilon}(\alpha_\varphi)=\cot{\theta_0}-a_0\varepsilon,$$
    which is equivalent to
    \begin{equation}\label{eqMA}
        \left(\alpha_\varphi-(\cot{\theta_0}-a_0\varepsilon)\omega\right)^2=(\left(\cot{\theta_0}-a_0\varepsilon\right)^2+1+\varepsilon)\omega^2.
    \end{equation}
    Since $[\alpha-\cot{\theta_0}\omega]$ is nef and $\omega$ is a K{\"a}hler form, the class $[\alpha-(\cot{\theta_0}-a_0\varepsilon)\omega]$ is K{\"a}hler. By Yau's theorem \cite{Yau}, there exists a unique solution $u_{MA}$ of \eqref{eqMA} such that $\mathrm{Im}\mathrm{CY}_\mathbb{C}(u_{MA})=0$. Here, recall that the functional $\mathrm{CY}_\mathbb{C}$ is given by
    $$\mathrm{CY}_\mathbb{C}(0)=0,\quad d\mathrm{CY}_\mathbb{C}(\varphi)(\psi)=\int_X \psi \, (\alpha_\varphi+\sqrt{-1}\omega)^n.$$
    With the properties of $F_{\omega,-\varepsilon}$ \cite[Lemma 5.6]{GChen}, the same arguments as \cite[Section 4.2]{CL} imply that the twisted dHYM flow
    \begin{equation}\label{eqtdHYMflow}
        \begin{cases}
        \dot{u}_t=F_{\omega,-\varepsilon}(\alpha_u)-\cot{\theta_0}+a_0\varepsilon \\ u(x,0)=u_0(x)\in \mathcal{H}_\pi
        \end{cases}
    \end{equation}
    converges smoothly to $u_{MA}+\mathrm{Im}\mathrm{CY}_\mathbb{C}(u_0)$. Since the flow is a gradient flow of $\mathcal{J}_{-\varepsilon}$, we have
    $$\mathcal{J}_{-\varepsilon}(u_0)\ge\mathcal{J}_{-\varepsilon}(u_{MA})=-C_\varepsilon,$$
    namely, the claim follows.
    Then, along the dHYM flow $\varphi_t$ with the initial point $\varphi_0$ satisfying $0<\theta_\omega(\alpha_{\varphi_0}) < \pi$, we have
    \begin{equation}\label{ineqbdd}
        \mathcal{J}_{-\varepsilon}(\varphi_t)\ge - C_\varepsilon
    \end{equation}
    for some constant $C_\varepsilon$, since 
    $\inf\theta_\omega(\alpha_{\varphi_0})<\theta_\omega(\alpha_{\phi_t})<\sup\theta_\omega(\alpha_{\varphi_0})$
    by \cite[Lemma 3.2]{FYZ}. Recall that the imaginary part of the Calabi-Yau functional $\mathrm{CY}_\mathbb{C}$ 
    is constant along the dHYM flow by \cite[Lemma 2.7]{FYZ}. Thus,
    \begin{align*}
        \mathcal{J}_{-\varepsilon}(\varphi_t)-\mathcal{J}_{-\varepsilon}(\varphi_0)
        &=\int^t_0\int_X \dot{\varphi}_t \, \biggl( (\cot{\theta_0}-\cot{\theta_\omega(\alpha_\varphi)})\mathrm{Im}(\alpha_\varphi+\sqrt{-1}\omega)^n+\varepsilon\, \omega^n\biggr)\\
        &=\frac{1}{\sin{\theta_0}}\mathcal{J}(\varphi_t)+\varepsilon\int_X\varphi_t\, \omega^n\\
        &\le\frac{1}{\sin{\theta_0}}\mathcal{J}(\varphi_t)+\varepsilon C\sup\varphi_t\\
        &\le\frac{1}{\sin{\theta_0}}\mathcal{J}(\varphi_t)+\varepsilon C t,
    \end{align*}
    where we used \cite[Lemma 3.2]{FYZ} again to estimate $\sup\varphi_t$.
    By combining this with \eqref{ineqbdd}, we see that for any $\varepsilon>0$ there exist constants $C$ and $C_\varepsilon$ such that
    \begin{equation}\label{ineqdhymslope}
        \mathcal{J}(\varphi_t)\ge -\varepsilon C t-C_\varepsilon.
    \end{equation}
    By Lemma \ref{lemdhymconvexity}, there exists a constant $\mu\le0$ such that
    $$\lim_{t\rightarrow\infty}\frac{d}{dt}\mathcal{J}(\varphi_t)=\mu.$$
    By dividing both hands of \eqref{ineqdhymslope} by $t$ and letting $t$ tend to $\infty$, we also see that
    $$\lim_{t\rightarrow\infty}\frac{d}{dt}\mathcal{J}(\varphi_t)\ge - \varepsilon C.$$
    Since $\varepsilon>0$ is arbitrary, we get $\mu\ge0$, which concludes $\mu=0$. From \eqref{eqdhymderiv}, we obtain
    $$\int_X \dot{\varphi}_t^2 \, \mathrm{Im}(\alpha_\varphi+\sqrt{-1}\omega)^n \rightarrow0.$$
    By \cite[Lemma 3.2]{FYZ}, we have
    \begin{align*}
        \mathrm{Im}(\alpha_\varphi+\sqrt{-1}\omega)^n
        &=\left(\prod_i\sqrt{\lambda_i^2+1}\right)\left(\sin{\theta_\omega(\alpha_\varphi)}\right)\omega^n\\
        &\ge \inf\left(\sin{\theta_\omega(\alpha_{\varphi_0}})\right) \omega^n.
    \end{align*}    
    Therefore, we conclude the statement.
\end{proof}

The proposition settles the dHYM variant of Proposition \ref{propslope} in dimension 2. We see below that the arguments in Subsection \ref{secvsub} can apply to the dHYM flow as well. 
For a K{\"a}hler form $\omega$ and a smooth closed real $(1,1)$-form $\alpha$, we denote by $P$ and $Q$ the operators defined by
$$P_\omega(\alpha)=-\min_{k=1,\dots,n}\cot{\sum_{j\neq k}\mathrm{arccot}\, {\lambda_j}},\quad Q_\omega(\alpha)=-\cot{\sum_j \mathrm{arccot}\, {\lambda_j}},$$
where $\lambda_j$'s are eigenvalues of $\omega^{-1}\alpha$. The operators are convex by \cite[Lemma 5.6]{GChen}.
We say that a current $\alpha$ is a viscosity subsolution of
$$Q_\omega(\alpha)=-\cot{\theta_0},$$
denoted by
$$Q_\omega(\alpha)\le_v -\cot{\theta_0},$$
if for any point $x\in X$ and any upper test $q$ of $\varphi$ at $x$, where $\varphi$ is a upper semicontinuous local potential of $\alpha$, we have
\begin{equation*}
    Q_\omega(\delb q)(x)\le-\cot{\theta_0}.
\end{equation*}
By \cite[Lemma 3.2]{FYZ}, along the dHYM flow $\varphi_t$, we have
\begin{equation}\label{eqpsh}
    \alpha_{\varphi_t}-\left(\cot{\max \theta_\omega(\alpha_{\varphi_0})}\right)\omega>0.
\end{equation}
Therefore, the sequence of functions $\psi_t=\varphi_t-\sup\varphi_t$ has an $L^1$-limit, denoted by $\psi_\infty$, with $\sup \psi_\infty=0$ if we take a subsequence. 

\begin{prop}\label{propdhymvsub}
    Suppose that a sequence of closed real $(1,1)$-forms $\alpha_k=\alpha+\delb \psi_k$ satisfies
    \begin{align*}
        \psi_k \rightarrow \psi_\infty \, \text{in $L^1(\omega)$ and} \ 
        \|Q_\omega(\alpha_k)+\cot{\theta_0}\|_{L^2(\omega)} \rightarrow 0.    
    \end{align*}    
    Then, we have
    $$Q_\omega(\alpha_\infty)\le_v -\cot{\theta_0},$$
    where $\alpha_\infty=\alpha+\delb \psi_\infty.$
\end{prop}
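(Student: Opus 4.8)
The plan is to run the argument of Proposition \ref{propvsub} verbatim, with the $J$-flow operator $Q_\omega(\chi)=\sum_j\lambda_j^{-1}$ replaced by the dHYM operator $Q_\omega(\alpha)=-\cot\sum_j\mathrm{arccot}\,\lambda_j$ and the constant $c$ replaced by $-\cot\theta_0$. Fix $p\in X$, a coordinate neighborhood $U$, and an upper semicontinuous local potential $f$ of $\alpha$; set $u_k=f+\psi_k$, so that $\delb u_k=\alpha_k$, and let $u_k^\delta$ be the standard mollification on $U_{0,\delta}$. With a constant-coefficient form $\omega_0$ chosen below, I reproduce the chain \eqref{eqregularization}: using that $\omega_0$ has constant coefficients I pull $\omega_0^{-1}$ through the convolution, then apply the convexity of the dHYM operator from \cite[Lemma 5.6]{GChen} (valid on the supercritical cone, which is convex and, by \eqref{eqpsh}, contains all the $\alpha_k$ uniformly) to move $Q_{\omega_0}$ inside the integral, and finally estimate by Cauchy--Schwarz against the hypothesis $\|Q_\omega(\alpha_k)+\cot\theta_0\|_{L^2(\omega)}\to0$. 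The two applications of Proposition \ref{propCIL} --- first as $k\to\infty$ for fixed $\delta$ (uniform convergence $u_k^\delta\to u_\infty^\delta$ from the $L^1$-convergence of $\psi_k$), then as $\delta\to0$ --- yield $Q_{\omega_0}(\delb u_\infty)\le_v-\cot\theta_0$ near $p$, from which the claim at $p$ follows by a limiting argument in the background metric.

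The step where the dHYM case genuinely departs from Proposition \ref{propvsub}, and the one I expect to be the main obstacle, is the comparison between $Q_{\omega_0}$ and $Q_\omega$. For the $J$-flow one has $Q_\omega(\chi)=\operatorname{tr}(\chi^{-1}\omega)$, manifestly increasing in $\omega$, so $\omega_0\le\omega$ immediately gives $Q_{\omega_0}\le Q_\omega$. The dHYM operator is \emph{not} monotone in the background metric: writing $\theta_\omega(\alpha)=\arg\det(\alpha+\sqrt{-1}\omega)$, the derivative of $s\mapsto\theta_{\omega_s}(\alpha)$ along an increasing path $\omega_s$ has the sign of a sum of terms $\lambda_j(\lambda_j^2+1)^{-1}$, which is indefinite as soon as $\alpha$ has negative eigenvalues --- and in the supercritical regime $\alpha_\infty$ need not be positive. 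I would resolve this not by monotonicity but by uniform continuity in the background metric: along the flow \eqref{eqpsh} together with \cite[Lemma 3.2]{FYZ} confines $\theta_\omega(\alpha_k)$ to a compact subinterval $[\theta_-,\theta_+]\subset(0,\pi)$, and since each summand $\lambda(\lambda^2+1)^{-1}$ is bounded in absolute value by $1/2$, a direct computation bounds $|\theta_{\omega_0}(\beta)-\theta_\omega(\beta)|$ by $C\|\omega_0-\omega\|$ uniformly over $\beta$ in this region; as $\csc^2$ is bounded on $[\theta_-,\theta_+]$, this gives $|Q_{\omega_0}(\beta)-Q_\omega(\beta)|\le C'\|\omega_0-\omega\|$ uniformly. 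Hence I take $\omega_0=\omega(p)$ constant on a small ball, apply convexity \emph{first} and this estimate pointwise afterwards, so that \eqref{eqregularization} acquires only an extra error $C'\sup_{U_0}\|\omega-\omega(p)\|$; shrinking $U_0$ sends it to zero and recovers $Q_\omega(\delb q)(p)\le-\cot\theta_0$ directly, since $\omega_0$ agrees with $\omega$ at $p$.

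A second, milder difference arises in the $\delta\to0$ application of Proposition \ref{propCIL}. In the $J$-case the decreasing monotonicity of $\{u_\infty^\delta\}$, used to verify hypotheses \ref{itemCIL1} and \ref{itemCIL2}, followed from $u_\infty$ being psh; here $\alpha_\infty$ is only relatively positive, $\alpha_\infty\ge\cot(\max\theta_\omega(\alpha_{\varphi_0}))\,\omega$ by \eqref{eqpsh}, so $u_\infty$ is psh only up to the smooth summand $\cot(\max\theta_\omega(\alpha_{\varphi_0}))\,g$, where $g$ is a smooth local potential of $\omega$. I would decompose $u_\infty=h+\cot(\max\theta_\omega(\alpha_{\varphi_0}))\,g$ with $h$ psh: the mollification of the psh part decreases to it and $g^\delta\to g$ locally uniformly, which suffices for \ref{itemCIL1} (pointwise convergence $u_\infty^\delta(z)\to u_\infty(z)$) and, combined with the upper semicontinuity of $u_\infty$ via reverse Fatou, for \ref{itemCIL2}. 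With these two adjustments the remaining steps --- the convexity estimate, the first limit $k\to\infty$, and the final passage to $\omega(p)$ --- go through exactly as in Proposition \ref{propvsub}.
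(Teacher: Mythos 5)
Your proposal is correct and shares the overall skeleton of the paper's proof (mollification, convexity of $Q$ from \cite[Lemma 5.6]{GChen}, Cauchy--Schwarz against the $L^2$ hypothesis, two applications of Proposition \ref{propCIL}, and the quasi-psh decomposition of $u_\infty$ via \eqref{eqpsh} at the $\delta\to 0$ step), but it handles the one genuinely new difficulty --- the failure of monotonicity of $Q_\omega(\alpha)$ in the background metric --- by a different device. The paper invokes \cite[Lemma 5.1]{CL}: after perturbing $\alpha_k$ to $\alpha_k+\varepsilon\omega$ one recovers $Q_{\omega_0}(\alpha_k+\varepsilon\omega)\le Q_\omega(\alpha_k)$ for constant-coefficient $\omega_0$ sufficiently close to $\omega$ (closeness depending on $\varepsilon$ and on a uniform upper bound $\theta_\omega(\alpha_k)\le\theta_+<\pi$), runs the argument for $\alpha_\infty+\varepsilon\omega$, and removes $\varepsilon$ with a third application of Proposition \ref{propCIL}. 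You instead prove a uniform Lipschitz estimate $|Q_{\omega_0}(\beta)-Q_\omega(\beta)|\le C'\|\omega_0-\omega\|$ and absorb the error by shrinking the coordinate ball, which avoids the $\varepsilon$-perturbation and the extra limit. The trade-off is that your estimate requires the \emph{two-sided} confinement of $\theta_\omega(\alpha_k)$ to a compact subinterval $[\theta_-,\theta_+]$ of $(0,\pi)$ --- the lower bound $\theta_->0$ is essential, since $\csc^2$ blows up at $0$ and control of $|\theta_{\omega_0}-\theta_\omega|$ alone does not control $|\cot\theta_{\omega_0}-\cot\theta_\omega|$ there --- whereas the paper's route needs only the upper bound. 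Both bounds are supplied along the flow by \cite[Lemma 3.2]{FYZ}, so nothing is lost in the intended application, but your version of the proposition carries an extra implicit hypothesis on $\{\alpha_k\}$ beyond the stated $L^1$ and $L^2$ convergences. One point worth a sentence in a final write-up: Jensen's inequality for $Q_{\omega_0}$ requires the matrices $\alpha_k(z-\delta w)$ to lie in the convex supercritical cone \emph{with respect to} $\omega_0$, which follows from $\theta_{\omega_0}\le\theta_\omega+C\|\omega-\omega_0\|\le\theta_++C\|\omega-\omega_0\|<\pi$ once $U_0$ is small; you gesture at this but should make it explicit.
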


\begin{proof}
    We only point out the differences from Proposition \ref{propvsub}. To do the calculation similar to \eqref{eqregularization}, recall that, as in \cite[Lemma 5.1]{CL}, for $0<\theta<\pi$, there exists $\sigma_0(n,\theta)>0$ such that the following holds: let $\alpha$ and $\omega$ are smooth closed real $(1,1)$-forms, where $\omega$ is K{\"a}hler. Suppose $\theta_\omega(\alpha)<\theta$. For any $0<\varepsilon<\sigma_0$, there exists $\delta_\varepsilon>0$ such that if $\omega_0$ is a K{\"a}hler form with constant coefficients on a coordinate neighborhood satisfying $ |\omega-\omega_0|<\delta_\varepsilon$, then
    $$Q_{\omega_0}(\alpha+\varepsilon\omega)\le Q_\omega(\alpha),\quad P_{\omega_0}(\alpha+\varepsilon\omega)\le P_\omega(\alpha).$$
    By this fact, we have
    $$Q_{\omega_0}(\delb u_{k,\varepsilon}^\delta)(z)\le C_\delta\|Q_\omega(\alpha_k)+\cot{\theta_0}\|_{L^2(\omega)}-\cot{\theta_0},$$
    where $u_{k,\varepsilon}$ is an upper semicontinuous local potential of $\alpha_{\varphi_k}+\varepsilon\omega$ and other notations are analogous to \eqref{eqregularization}. By \eqref{eqpsh}, the same observation as in the proof of Proposition \ref{propvsub} implies that the sequences $\{u^\delta_{k,\varepsilon}\}_k$ and $\{u^\delta_{\infty,\varepsilon}\}_\delta$ satisfy the conditions in Proposition \ref{propCIL}. Therefore, we get
    $$Q_\omega(\alpha_\infty+\varepsilon\omega)\le_v -\cot{\theta_0}.$$
    Since $\omega$ is a smooth form, we conclude the statement by letting $\varepsilon$ tend to zero and using Proposition \ref{propCIL} again.
\end{proof}

The dHYM version of Proposition \ref{propequivalence} is the following:
\begin{prop}\label{propdhymequivalence}
    Let $\omega$ be a K{\"a}hler form. Then a current $\alpha$ satisfies $P_\omega(\alpha)\le_v -\cot{\theta_0}$ if and only if for any $\varepsilon>0$, any open subset of a coordinate neighborhood $O$, and any K{\"a}hler form $\omega_0$ with constant coefficients on $O$ which is sufficiently close to $\omega$, we have
    $$P_{\omega_0}(\delb \varphi^\delta_\varepsilon)\le_v -\cot{\theta_0},$$
    where $\varphi_\varepsilon$ is a local potential of $\alpha+\varepsilon\omega$ and $\varphi^\delta_\varepsilon$ is its regularization.
\end{prop}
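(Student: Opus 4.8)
The plan is to follow the proof of Proposition \ref{propequivalence} verbatim in its structure, with one essential modification. There, the monotonicity in the reference form was available (namely $\omega_0\le\omega$ forces $Q_{\omega_0}\le Q_\omega$, as used in \eqref{eqregularization}), but this is no longer at our disposal for the arccot-type operators. Its replacement is exactly the quantitative comparison of \cite[Lemma 5.1]{CL} already exploited in Proposition \ref{propdhymvsub}: for a supercritical smooth form one has $P_{\omega_0}(\beta+\varepsilon\omega)\le P_\omega(\beta)$ once $\omega_0$ has constant coefficients and $|\omega-\omega_0|<\delta_\varepsilon$. This is precisely why the statement is phrased with the shift $\alpha+\varepsilon\omega$ and requires $\omega_0$ only to be \emph{close} to $\omega$, rather than $\le\omega$.

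For the ``only if'' direction, assume $P_\omega(\alpha)\le_v-\cot\theta_0$ and fix $\varepsilon>0$, an open set $O$ of a coordinate neighborhood, and a constant-coefficient K\"ahler form $\omega_0$ with $|\omega-\omega_0|<\delta_\varepsilon$. First I would transfer the subsolution property to the constant-coefficient operator: if $q$ is an upper test of the potential $\varphi_\varepsilon$ of $\alpha+\varepsilon\omega$ at a point $x$, then $q-\varepsilon\psi$ (with $\psi$ a potential of $\omega$) is an upper test of the potential of $\alpha$, which is psh by \cite[Proposition 1.3]{EGZ}; applying $P_\omega(\alpha)\le_v-\cot\theta_0$ to this test and then \cite[Lemma 5.1]{CL} to the smooth form $\delb q-\varepsilon\omega$ yields $P_{\omega_0}(\delb q(x))\le-\cot\theta_0$, so $P_{\omega_0}(\delb\varphi_\varepsilon)\le_v-\cot\theta_0$ on $O$. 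From here the argument is identical to Proposition \ref{propequivalence}: truncate by $\sup\{\varphi_\varepsilon,a\psi-k\}$ to obtain a locally bounded subsolution (the supremum of finitely many subsolutions being again a subsolution), pass to the sup-convolution, which still satisfies the subsolution condition by \cite[Proposition 4.2]{Ishii} since $\omega_0$ has constant coefficients, and then use Aleksandrov's theorem together with the convexity of $P_{\omega_0}$ (\cite[Lemma 5.6]{GChen}) and Jensen's inequality to push the bound onto the mollification. Letting the sup-convolution parameter tend to $0$ and then $k\to\infty$, each time invoking Proposition \ref{propCIL} along the resulting decreasing sequences, gives $P_{\omega_0}(\delb\varphi^\delta_\varepsilon)\le-\cot\theta_0$, the asserted condition (where $\le_v$ reduces to a pointwise inequality since $\varphi^\delta_\varepsilon$ is smooth).

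The ``if'' direction is the limiting argument already carried out at the end of Proposition \ref{propdhymvsub}. Starting from $P_{\omega_0}(\delb\varphi^\delta_\varepsilon)\le-\cot\theta_0$, I would let $\delta\to0$ (the mollifications decrease to the psh potential $\varphi_\varepsilon$), then $\varepsilon\to0$, and finally $\omega_0\to\omega$, transferring the subsolution property up each decreasing sequence by Proposition \ref{propCIL} exactly as in the proofs of Propositions \ref{propvsub} and \ref{propdhymvsub}, to arrive at $P_\omega(\alpha)\le_v-\cot\theta_0$. The main obstacle is organizing this multi-parameter passage to the limit: unlike the Monge--Amp\`ere case, the comparison between $\omega$ and the constant-coefficient $\omega_0$ is only approximate and conditional on a supercritical angle bound, so I expect the delicate point to be verifying that this bound is inherited by the relevant test forms and preserved under both the sup-convolution and the mollification, and that the three limits can be taken in a compatible order while keeping \cite[Lemma 5.1]{CL} applicable throughout.
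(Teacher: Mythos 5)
Your proposal is correct and follows the same route the paper intends: the paper omits this proof, stating it is identical to that of Proposition \ref{propequivalence}, and you reconstruct exactly that argument with the one genuinely necessary modification, namely replacing the monotonicity $\omega_0\le\omega$ by the $\varepsilon$-shifted comparison of \cite[Lemma 5.1]{CL}, just as the paper already does in Proposition \ref{propdhymvsub}. The delicate point you flag about preserving the supercritical angle bound through the sup-convolution, mollification, and the three limits is real, but it is the same issue already handled in Proposition \ref{propdhymvsub}, so no new idea is required.
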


The proof is identical to that of Proposition \ref{propequivalence}, and is therefore omitted.

The arguments in Subsection \ref{secpp} follow verbatim for the dHYM flow since we have $P_\omega\le Q_\omega$, the equivalence between the viscosity subsolutions of the dHYM equation and the Monge-Amp{\`e}re equation, and Proposition \ref{propdhymequivalence}. Therefore, we complete the proof of the main theorem.


\begin{thebibliography}{99}

 \bibitem[BEGZ10]{BEGZ}
  S. Boucksom, P. Eyssidieux, V. Guedj, and A. Zeriahi, 
  \textit{Monge-Amp{\`e}re equations in big cohomology classes,}
  Acta Math. \textbf{205} (2010), no. 2, 199--262.
 \bibitem[Chen21]{GChen}
  G. Chen, 
  \textit{The J-equation and the supercritical deformed Hermitian-Yang-Mills equation,} 
  Invent. Math. \textbf{225} (2021), no. 2, 529--602. 
 \bibitem[Chen00]{XChen}
  X. X. Chen, 
  \textit{On the lower bound of the Mabuchi energy and its application,}
  Internat. Math. Res. Notices 2000, no. 12, 607--623. 
 \bibitem[Chen04]{XChen2}
  X. X. Chen, 
  \textit{A new parabolic flow in K{\"a}hler manifolds,} 
  Comm. Anal. Geom. \textbf{12} (2004), no. 4, 837--852.
 \bibitem[CL21]{CL}
  J. Chu and M.-C. Lee,
  \textit{Hypercritical deformed Hermitian-Yang-Mills equation,}
  preprint, available at arXiv:2107.13192v1.
 \bibitem[CLT24]{CLT}
  J. Chu, M.-C. Lee, and R. Takahashi, 
  \textit{A Nakai-Moishezon type criterion for supercritical deformed Hermitian-Yang-Mills equation,} 
  J. Differential Geom. \textbf{126} (2024), no. 2, 583--632.
 \bibitem[CS17]{CS17}
  T. C. Collins and G. Sz{\'e}kelyhidi, 
  \textit{Convergence of the $J$-flow on toric manifolds,} 
  J. Differential Geom. \textbf{107} (2017), no. 1, 47--81.
 \bibitem[CIL92]{CIL}
  M. G. Crandall, H. Ishii, and P.-L. Lions, 
  \textit{User's guide to viscosity solutions of second order partial differential equations}, 
  Bull. Amer. Math. Soc. (N.S.) \textbf{27} (1992), no. 1, 1--67.
 \bibitem[DMS23]{DMS}
  V. V. Datar, R. Mete, and J, Song,
  \textit{Minimal slopes and bubbling for complex Hessian equations,}
  preprint, available at arXiv:2312.03370v1.
 \bibitem[DP21]{Datar-Pingali}
  V. V. Datar and V. P. Pingali, 
  \textit{A numerical criterion for generalised Monge-Ampère equations on projective manifolds,}
  Geom. Funct. Anal. \textbf{31} (2021), no. 4, 767--814.
  \bibitem[DP04]{DP}
  J.-P. Demailly and M. Paun, 
  \textit{Numerical characterization of the K{\"a}hler cone of a compact K{\"a}hler manifold,} 
  Ann. of Math. (2) \textbf{159} (2004), no. 3, 1247--1274. 
 \bibitem[DDT19]{DDT}
  S. Dinew, H.-S. Do, and T. D. T{\^o}, 
  \textit{A viscosity approach to the Dirichlet problem for degenerate complex Hessian-type equations,}
  Anal. PDE \textbf{12} (2019), no. 2, 505--535.
 \bibitem[Don99]{Donaldson}
  S. K. Donaldson, 
  \textit{Moment maps and diffeomorphisms. Sir Michael Atiyah: a great mathematician of the twentieth century,} 
  Asian J. Math. \textbf{3} (1999), no. 1, 1--15.  
 \bibitem[EGZ11]{EGZ}
  P. Eyssidieux, V. Guedj, and A. Zeriahi, 
  \textit{Viscosity solutions to degenerate complex Monge-Amp{\`e}re equations,} 
  Comm. Pure Appl. Math. \textbf{64} (2011), no. 8, 1059--1094. 
 \bibitem[EGZ15]{EGZ3}
  P. Eyssidieux, V. Guedj, and A. Zeriahi 
  \textit{Continuous approximation of quasiplurisubharmonic functions,}
  Analysis, complex geometry, and mathematical physics: in honor of Duong H. Phong, 67--78, Contemp. Math., \textbf{644}, Amer. Math. Soc., Providence, RI, 2015.
 \bibitem[EGZ17]{EGZ2}
  P. Eyssidieux, V. Guedj, and A. Zeriahi, 
  \textit{Corrigendum: Viscosity solutions to complex Monge-Ampère equations [MR2839271],} 
  Comm. Pure Appl. Math. \textbf{70} (2017), no. 5, 815--821.
 \bibitem[FLSW14]{FLSW}
  H. Fang, M. Lai, J. Song, and B. Weinkove,
  \textit{The $J$-flow on Kähler surfaces: a boundary case,}
  Anal. PDE \textbf{7} (2014), no. 1, 215--226.
 \bibitem[FYZ24]{FYZ}
  J. Fu, S.-T. Yau, and D. Zhang, 
  \textit{A new flow solving the LYZ equation in Kähler geometry,} 
  J. Differential Geom. \textbf{128} (2024), no. 1, 153--192.
  \bibitem[Has19]{Hashimoto}
  Y. Hashimoto,
  \textit{Existence of twisted constant scalar curvature K{\"a}hler metrics with a large twist,}
  Math. Z. \textbf{292} (2019), no. 3-4, 791--803.
 \bibitem[Ish89]{Ishii}
  H. Ishii, 
  \textit{On uniqueness and existence of viscosity solutions of fully nonlinear second-order elliptic PDEs,} 
  Comm. Pure Appl. Math. \textbf{42} (1989), no. 1, 15--45.
 \bibitem[JY17]{JY}
  A. Jacob and S.-T. Yau, 
  \textit{A special Lagrangian type equation for holomorphic line bundles,} Math. Ann. \textbf{369} (2017), no. 1-2, 869--898.
 \bibitem[LYZ01]{LYZ}
  N. C. Leung, S.-T. Yau, and E. Zaslow, 
  \textit{From special Lagrangian to Hermitian-Yang-Mills via Fourier-Mukai transform,} 
  Winter School on Mirror Symmetry, Vector Bundles and Lagrangian Submanifolds (Cambridge, MA, 1999), 209--225, AMS/IP Stud. Adv. Math., \textbf{23}, Amer. Math. Soc., Providence, RI, 2001. 
 \bibitem[SD20]{SD}
  Z. Sj{\"o}str{\"o}m Dyrefelt,
  \textit{Optimal lower bounds for Donaldson's J-functional,}
  Adv. Math. \textbf{374} (2020), 107271, 37 pp.  
 \bibitem[Song20]{Song}
  J. Song,
  \textit{Nakai-Moishezon criterions for complex Hessian equations,}
  preprint, available at arXiv:2012.07956v1.
 \bibitem[SW08]{SW}
  J. Song and B. Weinkove, 
  \textit{On the convergence and singularities of the $J$-flow with applications to the Mabuchi energy,} 
  Comm. Pure Appl. Math. \textbf{61} (2008), no. 2, 210--229. 
 \bibitem[Yau78]{Yau}
  S.-T. Yau, 
  \textit{On the Ricci curvature of a compact Kähler manifold and the complex Monge-Ampère equation. I.} 
  Comm. Pure Appl. Math. \textbf{31} (1978), no. 3, 339--411.
\end{thebibliography}
\end{document}